\newtheorem{proposition}{Proposition}
\newtheorem{lemma}{Lemma}
\newtheorem{claim}{Claim}
\newtheorem{theorem}{Theorem}
\newtheorem*{theorem*}{Theorem}
\newtheorem{example}{Example}
\newtheorem*{example*}{Example}
\newenvironment{subproof}[1][\proofname]{%
  \begin{proof}[#1]%
}{%
  \end{proof}%
}
\providecommand{\customgenericname}{}
\newcommand{\newcustomtheorem}[2]{%
	\newenvironment{#1}[1]
	{%
		\renewcommand\customgenericname{#2}%
		\renewcommand\theinnercustomgeneric{##1}%
		\innercustomgeneric
	}
	{\endinnercustomgeneric}
}
\tikzset{decision/.style={diamond, draw, text width=4.5em, text badly centered, inner sep=0pt}}
\tikzset{inout/.style={ellipse, draw, text width=7em, text centered, rounded corners,
 minimum width=3.5cm}}
\tikzset{block/.style={rectangle, draw, text width=12em, text centered, rounded corners,
 minimum width=3.5cm}}
 \tikzset{block1/.style={rectangle, draw,fill=gray!20, text width=10em, text centered, rounded corners,
  minimum width=3.5cm}}
\tikzset{line/.style={draw, -latex}}
\title{An accelerated Levin--Clenshaw--Curtis method for the evaluation of highly oscillatory integrals}
\author[1]{Arieh Iserles}
\author[2]{Georg Maierhofer}
\affil[1]{Department of Applied Mathematics and Theoretical Physics, University of Cambridge}
\affil[2]{Mathematical Institute, University of Oxford\vspace{-0.2cm}}
\def\@maketitle{%
	\newpage
	\null
	\vskip 2em%
	\begin{center}%
		\let \footnote \thanks
		{\Large\bfseries \@title \par}%
		\vskip 1.5em%
		{\normalsize
			\lineskip .5em%
			\begin{tabular}[t]{c}%
				\@author
			\end{tabular}\par}%
		\vskip 1em%
		{\normalsize \@date}%
	\end{center}%
	\par
	\vskip 1.5em}
\date{\today}
\newcommand{\dd}{\mathrm{d}}
\newcommand{\e}{\mathrm{e}}
\newcommand{\ii}{\mathrm{i}}
\newcommand{\ChebyshevT}{\mathrm{T}}
\newcommand{\doublewidetilde}[1]{{%
  \mathpalette\double@widetilde{#1}%
}}
\newcommand{\double@widetilde}[2]{%
  \sbox\z@{$\m@th#1\widetilde{#2}$}%
  \ht\z@=.9\ht\z@
  \widetilde{\box\z@}%
}
\newcommand\revisionsA[1]{{#1}}
\newcommand\revisionsB[1]{{#1}}
\begin{document}

\pagenumbering{arabic}
\maketitle\ \vspace{-1.4cm}\\
\begin{abstract}
The efficient approximation of highly oscillatory integrals plays an important role in a wide range of applications. Whilst traditional quadrature becomes prohibitively expensive in the high-frequency regime, Levin methods provide a way to approximate these integrals in many settings at uniform cost. In this work, we present an accelerated version of Levin methods that can be applied to a wide range of physically important oscillatory integrals, by exploiting the banded action of certain differential operators on a Chebyshev polynomial basis. Our proposed version of the Levin method can be computed essentially in the same cost as a Fast Fourier Transform in the quadrature points and the dependence of the cost on a number of additional parameters is made explicit in the manuscript. This presents a significant speed-up over the direct computation of the Levin method in current state-of-the-art. We outline the construction of this accelerated method for a fairly broad class of integrals and support our theoretical description with a number of illustrative numerical examples.

\paragraph{Keywords} highly oscillatory quadrature\ $\cdot$\ Levin method\ $\cdot$\ numerical integration\ $\cdot$\ Chebyshev polynomials
\paragraph{Mathematics Subject Classification} 65D30
\end{abstract}


\section{Introduction}
Highly oscillatory integrals appear in the modelling of a wide range of high-frequency physical phenomena {\cite{engquist_fokas_hairer_iserles_2009}}. The efficient evaluation of these integrals {is thus} an integral part in the numerical simulation of such phenomena, from hybrid-numerical asymptotic methods for wave scattering \cite{chandler2012numerical,chandler2012high} to laser matter interaction in quantum mechanics \cite{iserles2019solving}. As a result in the dawn of modern numerical analysis the numerical approximation of such integrals, highly oscillatory quadrature, has received increasing attention in the literature and a variety of different approaches to this problem have been devised \cite{deano2017computing}.

Amongst the most successful techniques for the problem of computing highly oscillatory integrals are numerical steepest descent \cite{huybrechs2006evaluation,huybrechs2007construction}, complex Gaussian quadrature \cite{deano2009complex,huybrechs2012superinterpolation,asheimhuybrechs2013,celsus2021kissing} and Filon methods \cite{filon1930iii,iserles2004,iserles2005,iserlesnorsett2004,iserlesnorsett2005efficient,Gao2017a,Gao2017b,melenk2010,Dominguez2011,maierhoferthesis2021,maierhofer2021extended,wu2022filon}. A further set of successful techniques, and the focus of this present work are given by Levin methods. These methods were first introduced by Levin in two seminal papers \cite{levin1982procedures,levin1996fast} for the univariate setting, and later extended by Olver \cite{olver2006} also to the multivariate case. In essence, the central idea of the Levin method is to reformulate the evaluation of a highly-oscillatory integral as the solution of a non-oscillatory ODE and then to approximate the solution of this ODE system by collocation. \revisionsB{We note that around the same time of Levin's original work, a similar approach based on a Chebyshev spectral method for the corresponding ODE was developed independently by Hasegawa \& Torii \cite{hasegawa1987indefinite}.} An interesting extension to higher-dimensional domains using the formulation of Levin's equation as a minimisation problem was recently proposed by Ashton \cite{ashton2023}.

This collocation problem is generally speaking fairly expensive - direct solution costs roughly $\mathcal{O}(\nu^3)$ operations, where $\nu$ is the number of Levin collocation points. Recent work has sought to reduce this cost, notably this was achieved by Liu \cite{liu2012fast} who constructed an improved Levin method based on eigendecomposition in the linear collocation system which is able to reduce the cost to $\mathcal{O}(\nu^2)$. An efficient iterative approach to the problem based on fast discrete cosine transforms and GMRES was taken by \cite{olver2010fast} which, however, does not treat the case of vectorial oscillatory weight functions (cf.\ the case $M>1$ below). \revisionsB{Relatedly, a particularly noteworthy set of work addressing this issue was conducted by Keller \cite{keller2007method,keller1998indefinite,keller2012practical} which is based on the spectral method introduced in \cite{hasegawa1987indefinite} and derives efficient recurrence relations for the Chebyshev coefficients of the solution to Levin's equation. This approach notably is applicable to a wide class of oscillators which satisfy differential equations with polynomial coefficients. In the present work we take a similar approach in the Levin collocation setting and extend this to the case when end-point derivatives are included in Levin's collocation system to ensure a smaller asymptotic error of the method \cite{olver2006}. For this} we consider Clenshaw--Curtis points for the collocation problem and note that a slightly modified differential equation actually leads to banded matrix presentations of all relevant operators in the corresponding Chebyshev polynomial basis. By solving this banded matrix problem directly and exploiting fast discrete cosine transforms this idea allows us to construct a highly efficient version of Levin methods with overall cost of roughly $\mathcal{O}(\nu\log \nu)$ thus presenting a significant speed-up over prior work. We provide a detailed construction of the modified differential equation and the corresponding method which can be applied to integrands of various levels of complexity.

This manuscript is structured as follows. In section~\ref{sec:background_and_notation} we specify the types of integrals under consideration and summarise relevant prior results on the Levin method. This is followed by the construction of our new method in section~\ref{sec:accelerated_LCC_method} beginning with simple special cases and ultimately expressing the method in its full generality applicable to all integrals considered in Levin's original works \cite{levin1982procedures,levin1996fast}. Finally, in section~\ref{sec:numerical_experiments}, we provide several numerical examples which demonstrate the favourable performance of our new method in comparison to prior work. Concluding remarks and an outlook of potential future work are provided in section~\ref{sec:conclusions}.

\section{Background and notation}\label{sec:background_and_notation}
\subsection{The types of integrals under consideration}
In this manuscript we are interested in constructing efficient approximations to integrals of the form
\begin{align}\label{eqn:general_integral_levin}
	I_\omega[\mathbf{f}]:=\int_{-1}^1 \langle \mathbf{f},\mathbf{w}\rangle (x)\,\dd x,
\end{align}
where $\mathbf{f}:[-1,1]\rightarrow\mathbb{C}^M$ is an $M$-vector of $\omega$-independent functions, $\mathbf{w}:[-1,1]\rightarrow\mathbb{C}^M$ is an $M$-vector of $\omega$-dependent linearly independent weight functions that oscillate rapidly when $\omega\gg1$, and $\langle\cdot,\cdot\rangle$ is the usual scalar product of two vectors. We assume that the oscillatory weight functions satisfy a differential equation of the form
\begin{align}\label{eqn:differential_equation_weight_function}
	\mathbf{w}'(x)=\mathsf{G}_\omega(x)\mathbf{w}(x),\quad x\in[-1,1],
\end{align}
where $\mathsf{G}_\omega:[-1,1]:\mathbb{C}^{M\times M}$ is a matrix, whose entries are rational functions in $x$ without poles in $[-1,1]$.

\begin{example}[Exponential oscillator]\label{ex:exponential_oscillator}
	A special case is the integral
	\begin{align}\label{eqn:general1d_integral_levin}
		I_\omega[f]=\int_{-1}^1 f(x) \e^{\ii\omega g(x)}\,\dd x,
	\end{align}
	where $g(x)$ is a polynomial of finite degree $d\in\mathbb{N}$, with $g'(x)\neq 0,\, \forall x\in[-1,1]$. In this case \eqref{eqn:differential_equation_weight_function} becomes
	\begin{align*}
		w'(x)=\ii\omega g'(x)w(x), \quad \text{for\ }\ w(x)=\e^{\ii\omega g(x)}.
	\end{align*}
\end{example}
\begin{example}[Bessel weight function, see Example 1 in \cite{levin1996fast}]\label{ex:bessel_weight_function}
	Another example in this class of integrals are finite Hankel transforms of the form
	\begin{align*}
		I_\omega[f]:=\int_{-1}^1f(x)\mathrm{J}_\gamma(\omega (x+a))\,\dd x
	\end{align*}
for any $a\in(-\infty,-1)\cup(1,\infty)$, where $\mathrm{J}_\gamma$ is the Bessel function of the first kind with index $\gamma\in\mathbb{R}$. Here we take $\mathbf{w}(x)=(\mathrm{J}_{\gamma}(\omega (x+a)),\mathrm{J}'_\gamma(\omega (x+a)))^\top$
which satisfies the differential equation \eqref{eqn:differential_equation_weight_function} with
\begin{align*}
	\mathsf{G}_\omega=\begin{pmatrix}
		0&\omega\\
		-\omega+\frac{\nu^2}{\omega(x+a)^{2}}&-\frac{1}{x+a}
	\end{pmatrix}.
\end{align*}
\end{example}
\subsection{The Levin method}
Introduced in two seminal papers by Levin \cite{levin1982procedures,levin1996fast}, the Levin method provides a way of approximating the {integral} \eqref{eqn:general_integral_levin} at frequency-independent cost, by relating the computation of $I_\omega[f]$ to the solution of an ordinary differential equation of non-oscillatory nature. As a first step consider a function $\mathbf{u}:[-1,1]\rightarrow\mathbb{C}^m$ which satisfies
\begin{align}\label{eqn:general_Levin_differential_equation}
	\bm{\mathcal{L}}_\omega\mathbf{u}(x):=\mathbf{u}'(x)+\mathsf{G}_\omega^{{\top}}\mathbf{u}(x)=\mathbf{f}(x),\quad x\in[-1,1],
\end{align}
where we introduced the Levin differential operator $\bm{\mathcal{L}}_\omega=\dd/\dd x+\mathsf{G}^\top_\omega$. We can use the vector-valued function $\mathbf{u}$ to write the integrand of $I_\omega[\mathbf{f}]$ as an exact differential
\begin{align*}
	\langle\mathbf{f},\mathbf{w}\rangle \,\dd x=\left(\langle\mathbf{u}',\mathbf{w}\rangle+\langle\mathsf{G}_\omega^{{\top}}\mathbf{u},\mathbf{w}\rangle\right)\dd x
	=\left(\langle\mathbf{u}',\mathbf{w}\rangle+\langle\mathbf{u},\mathsf{G}_\omega\mathbf{w}\rangle\right)\dd x
	=\left(\langle\mathbf{u}',\mathbf{w}\rangle+\langle\mathbf{u},\mathbf{w}'\rangle\right)\dd x
	&=\dd\langle \mathbf{u},\mathbf{w}\rangle,
\end{align*}
and therefore to compute $I_\omega[\mathbf{f}]$ by evaluating $\mathbf{u}$:
\begin{align*}
	I_\omega[\mathbf{f}]=\int_{-1}^1\dd\langle \mathbf{u},\mathbf{w}\rangle=\langle\mathbf{u}(1),\mathbf{w}(1)\rangle-\langle\mathbf{u}(-1),\mathbf{w}(-1)\rangle.
\end{align*}
The Levin method then proceeds by approximating the function $\mathbf{u}$ using a collocation solution of \eqref{eqn:general_Levin_differential_equation}. For this we choose a sequence of interpolation bases $\{{\psi_n^{[m]}}\}_{n=0,\dots, \nu+2s+1},\,m=1,\dots,M$, then write
\begin{align*}
	\mathbf{q}=\left(\sum_{n=0}^{\nu+2s+1}\alpha_n^{{[1]}}\psi_n^{{[1]}},\dots,\sum_{n=0}^{\nu+2s+1}\alpha_n^{{[M]}}\psi_n^{{[M]}}\right)^\top,
\end{align*}
and solve for the coefficients $\alpha_n^{{[m]}},\, n=0,\dots \nu+2s+1,\,m=1,\dots,M,$ by solving the linear system arising from the collocation equations:
\begin{align}\begin{split}\label{eqn:general_levin_collocation_equations}
	\bm{\mathcal{L}}_\omega\mathbf{q}(x_j)&=\mathbf{f}(x_j),\quad j=0,\dots,\nu+1,\\
	\left[\frac{\dd^l}{\dd x^{l}}\bm{\mathcal{L}}_\omega\mathbf{q}\right]_{x=\pm1}&=\left[\frac{\dd^l}{\dd x^{l}}\mathbf{f}\right]_{x=\pm 1},\quad l=1,\dots,s,
	\end{split}
\end{align}
where $s\in\mathbb{N}\cup\{0\}$, and $-1=x_0<x_1<\cdots<x_\nu<x_{\nu+1}=1$ are some collocation points. In fact, the idea of using confluent collocation points (i.e. including derivative values in this collocation system) was first introduced by Olver \cite{olver2006}, and the original version of the Levin method given in \cite{levin1982procedures,levin1996fast} has $s=0$ in \eqref{eqn:general_levin_collocation_equations}.

The important insight of the Levin method is that in solving for a non-oscillatory solution of \eqref{eqn:general_Levin_differential_equation} the numerical method requires significantly fewer degrees of freedom than would be required to approximate the oscillatory integral $I_\omega[f]$ directly. This idea is supported by several rigorous statements concerning the properties of exact and approximate solutions to \eqref{eqn:general_Levin_differential_equation} and for a detailed treatment we refer the reader to \cite[Appendix]{levin1982procedures} and \cite[\S3.3]{deano2017computing}. However we mention the following important observation made by Olver \cite[Theorem 4.1]{olver2006} concerning the asymptotic error of the Levin method. Let us denote the Levin quadrature rule by
\begin{align*}
	\mathcal{Q}_\omega^{L,[\nu,s]}[\mathbf{f}]:=\langle\mathbf{q}(1),\mathbf{w}(1)\rangle-\langle\mathbf{q}(-1),\mathbf{w}(-1)\rangle.
\end{align*}
Then Olver showed the following asymptotic error estimate, which holds in the limit $\omega\rightarrow\infty$:
\begin{theorem}[Thm~4.1 in \cite{olver2006}, see also Thm.~3.5 in \cite{deano2017computing}]\label{thm:levin_asymptotic_error} Suppose we are in the case of integrals described in Example~\ref{ex:exponential_oscillator}, i.e.
	\begin{align*}
			I_\omega[f]=\int_{-1}^1 f(x) e^{i\omega g(x)}\,\dd x,
	\end{align*}
with $g'(x)\neq 0,\, \forall x\in[-1,1]$ and $g^{(j)}(\pm1)\neq0,\,j=2,\dots,s$. Then the Levin method \revisionsA{based on \eqref{eqn:general_levin_collocation_equations}} has an asymptotic error of the form
\begin{align*}
I_\omega[\mathbf{f}]-\mathcal{Q}_\omega^{L,[\nu,s]}[\mathbf{f}]=\mathcal{O}(\omega^{-s-2}),\quad\omega\rightarrow\infty.
\end{align*}
\end{theorem}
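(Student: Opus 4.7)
The plan is to reduce the quadrature error to an oscillatory integral of the collocation residual, then exploit the multiple zeros of that residual at the endpoints via repeated non-stationary-phase integration by parts. In the setting of Example~\ref{ex:exponential_oscillator} we have $\mathbf{f}=f$, $w(x)=\e^{\ii\omega g(x)}$ and $\bm{\mathcal{L}}_\omega q=q'+\ii\omega g'q$, so that $\bm{\mathcal{L}}_\omega q\cdot w=(q\,w)'$ for \emph{any} smooth $q$---the exact-differential identity preceding \eqref{eqn:general_levin_collocation_equations} is unconditional and does not require $q$ to solve anything. Integrating and using the definition of $\mathcal{Q}_\omega^{L,[\nu,s]}$ gives
\begin{align*}
I_\omega[f]-\mathcal{Q}_\omega^{L,[\nu,s]}[f]=\int_{-1}^{1}\bigl(f(x)-\bm{\mathcal{L}}_\omega q(x)\bigr)\e^{\ii\omega g(x)}\,\dd x=I_\omega[r],
\end{align*}
where $r:=f-\bm{\mathcal{L}}_\omega q$. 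By \eqref{eqn:general_levin_collocation_equations} the residual vanishes at $x_0,\dots,x_{\nu+1}$, and the confluent conditions strengthen this to $r^{(l)}(\pm 1)=0$ for $l=0,1,\dots,s$, so $r$ has a zero of multiplicity $s+1$ at each endpoint.

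Next I would iterate the non-stationary-phase identity, which is licit since $g'$ is bounded away from zero on $[-1,1]$. Defining $h_0=r$ and $h_{k+1}=(h_k/g')'$, one obtains, for any $N\ge 1$,
\begin{align*}
I_\omega[r]=\sum_{k=0}^{N-1}\frac{(-1)^k}{(\ii\omega)^{k+1}}\left[\frac{h_k(x)}{g'(x)}\,\e^{\ii\omega g(x)}\right]_{-1}^{1}+\frac{(-1)^N}{(\ii\omega)^N}\int_{-1}^{1}h_N(x)\,\e^{\ii\omega g(x)}\,\dd x.
\end{align*}
A short induction shows that $h_k(\pm 1)$ is a linear combination of $r(\pm 1),\dots,r^{(k)}(\pm 1)$ whose coefficients depend only on derivatives of $g$ at $\pm 1$; hence the confluent vanishing of $r$ forces $h_k(\pm 1)=0$ for $k=0,\dots,s$. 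Choosing $N=s+2$, the first $s+1$ boundary terms drop out, while the surviving $k=s+1$ boundary contribution and the integral remainder are each of size $\mathcal{O}(\omega^{-(s+2)})$, which is the desired estimate.

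The main technical obstacle is to guarantee that the implicit constants in these bounds are uniform in $\omega$. Both the surviving boundary term and the integral remainder involve derivatives of $q$, and hence of $r$, up to order $s+2$, so the argument hinges on a uniform bound of the form $\|q\|_{C^{s+2}[-1,1]}\lesssim 1$ as $\omega\to\infty$. This in turn reduces to proving that the collocation system arising from \eqref{eqn:general_levin_collocation_equations} is solvable and uniformly well conditioned at high frequency: the leading part $\ii\omega g'$ of $\bm{\mathcal{L}}_\omega$ is pointwise invertible with inverse of size $\mathcal{O}(\omega^{-1})$ under the non-resonance assumption $g'\neq 0$, while the differentiation $\dd/\dd x$ enters as a lower-order perturbation whose contribution washes out in the high-frequency limit. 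Making this precise is the substantive part of Olver's analysis in \cite{olver2006}. The ancillary hypothesis $g^{(j)}(\pm 1)\neq 0$ for $j=2,\dots,s$ is not required for the upper bound, but prevents accidental cancellation in the leading boundary contribution at $k=s+1$, ensuring that the rate $\omega^{-(s+2)}$ is actually attained.
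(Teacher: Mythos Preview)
The paper does not prove this theorem; it is stated purely as background and attributed to Olver~\cite{olver2006} (and \cite{deano2017computing}). Your outline is essentially Olver's original argument: rewrite the error as the oscillatory integral of the residual $r=f-\bm{\mathcal{L}}_\omega q$, use the confluent collocation conditions to force $r^{(l)}(\pm1)=0$ for $l\le s$, and then let repeated non-stationary-phase integration by parts kill the first $s+1$ boundary terms. So there is nothing to compare against in the present paper, and your strategy matches the cited source.

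One imprecision worth flagging: you write that the argument hinges on $\|q\|_{C^{s+2}[-1,1]}\lesssim 1$, but this is not strong enough. Since $r=f-q'-\ii\omega g'q$, the $(s+1)$-st derivative of $r$ at $\pm1$ contains the term $-\ii\omega\,(g'q)^{(s+1)}(\pm1)$; if $q$ were merely $\mathcal{O}(1)$ this would be $\mathcal{O}(\omega)$, and the surviving boundary contribution at $k=s+1$ would only give $\mathcal{O}(\omega^{-(s+1)})$. What is actually required---and what holds---is $\|q\|_{C^{s+2}}=\mathcal{O}(\omega^{-1})$, which is precisely what your next sentence about the dominant $\ii\omega g'$ part with $\mathcal{O}(\omega^{-1})$ inverse delivers. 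With that sharper bound the derivatives of $r$ stay $\mathcal{O}(1)$ and the rest of your argument goes through.
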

In a later paper, Olver \cite{Olver2007} showed a similar result concerning the error of the Levin method when $m>1$:
\begin{theorem}[Thm.~4.1 in \cite{Olver2007}]\label{thm:matrix_asymptotics}
	Assume the following conditions are met:
	\begin{itemize}
		\item $\mathbf{f}=\mathcal{O}(\tilde{\mathbf{f}}),\mathsf{G}=\mathcal{O}(\tilde{\mathsf{G}}),$ and $\mathbf{w}=\mathcal{O}(\tilde{\mathbf{w}})$,
		\item $\mathsf{G}^{-1}=\mathcal{O}(\hat{\mathsf{G}})$ where $\tilde{\mathsf{G}}\hat{\mathsf{G}}=\mathcal{O}(1)$, $\hat{\mathsf{G}}=o(1)$,
		\item the basis $\{\psi_n^{{[m]}}\}_{n=0,\dots, \nu+2s+1},\,m=1,\dots,M,$ is independent of $\omega$, {and}
		\item the basis $\{\psi_n^{{[m]}}\}_{n=0,\dots, \nu+2s+1},\,m=1,\dots,M,$ can interpolate at the given interior points and endpoint derivatives.
	\end{itemize}
Then, for large $\omega$, $\mathcal{Q}_\omega^{L,[\nu,s]}[\mathbf{f}]$ is well-defined and 
\begin{align*}
	I_\omega[\mathbf{f}]-\mathcal{Q}_\omega^{L,[\nu,s]}[\mathbf{f}]=\mathcal{O}(\tilde{\mathbf{f}}^\top\hat{\mathsf{G}}\mathbf{1}\hat{\mathsf{G}}^s\tilde{\mathbf{w}}),\quad\omega\rightarrow\infty,
\end{align*}
where $\mathbf{1}$ is a vector of length $m$ containing ones.
\end{theorem}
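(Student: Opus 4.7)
The plan is to reduce the quadrature error to a boundary term involving the collocation defect $\mathbf{e}=\mathbf{u}-\mathbf{q}$ at $x=\pm1$, then to control $\mathbf{e}$ at the endpoints by asymptotically inverting $\bm{\mathcal{L}}_\omega$, exploiting the fact that by construction the residual of the collocation equations vanishes to order $s+1$ at $\pm 1$.

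First I would mimic the exact-differential calculation that underlies the Levin method itself to obtain the identity
\[
I_\omega[\mathbf{f}]-\mathcal{Q}_\omega^{L,[\nu,s]}[\mathbf{f}]=\langle\mathbf{e}(1),\mathbf{w}(1)\rangle-\langle\mathbf{e}(-1),\mathbf{w}(-1)\rangle,
\]
which reduces the proof to estimating $\mathbf{e}$ at the endpoints. Writing $\mathbf{r}:=\mathbf{f}-\bm{\mathcal{L}}_\omega\mathbf{q}$ for the residual, the defect $\mathbf{e}$ satisfies the inhomogeneous equation $\bm{\mathcal{L}}_\omega\mathbf{e}=\mathbf{r}$, and the collocation conditions \eqref{eqn:general_levin_collocation_equations}, combined with the fact that the basis $\{\psi_n^{[m]}\}$ can interpolate at the prescribed data, force $\mathbf{r}^{(l)}(\pm 1)=0$ for $l=0,1,\ldots,s$.

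Next I would asymptotically invert $\bm{\mathcal{L}}_\omega$ by rewriting the defect equation as $\mathbf{e}=(\mathsf{G}_\omega^\top)^{-1}(\mathbf{r}-\mathbf{e}')$ and iterating, producing the formal series
\[
\mathbf{e}\sim\sum_{k=0}^\infty (-1)^k\mathbf{e}_k,\qquad \mathbf{e}_0=(\mathsf{G}_\omega^\top)^{-1}\mathbf{r},\qquad \mathbf{e}_{k+1}=(\mathsf{G}_\omega^\top)^{-1}\mathbf{e}_k'.
\]
This is an asymptotic series in the small matrix $\hat{\mathsf{G}}$: since the hypothesis $\tilde{\mathsf{G}}\hat{\mathsf{G}}=\mathcal{O}(1)$ keeps derivatives of the entries of $(\mathsf{G}_\omega^\top)^{-1}$ of the same order as $\hat{\mathsf{G}}$ itself, each new iterate carries one extra factor of $\hat{\mathsf{G}}$ relative to its predecessor. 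Each $\mathbf{e}_k$ is a matrix-weighted combination of $\mathbf{r},\mathbf{r}',\ldots,\mathbf{r}^{(k)}$ in which every term contains at least one factor of some $\mathbf{r}^{(j)}$; hence the vanishing conditions at the endpoints imply $\mathbf{e}_0(\pm1)=\cdots=\mathbf{e}_s(\pm1)=0$. The leading surviving contribution to $\mathbf{e}(\pm 1)$ is therefore $\mathbf{e}_{s+1}(\pm 1)$, which carries one outermost factor of $\hat{\mathsf{G}}$ wrapped around a chain of $s$ further factors of $\hat{\mathsf{G}}$ acting on $\mathbf{r}^{(s+1)}=\mathcal{O}(\tilde{\mathbf{f}})$. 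Pairing with $\mathbf{w}(\pm 1)=\mathcal{O}(\tilde{\mathbf{w}})$ and transferring the outermost $(\mathsf{G}_\omega^\top)^{-1}$ onto $\mathbf{w}$ via the symmetry of the scalar product yields the stated envelope $\mathcal{O}(\tilde{\mathbf{f}}^\top\hat{\mathsf{G}}\mathbf{1}\hat{\mathsf{G}}^s\tilde{\mathbf{w}})$. Well-posedness of the collocation system for all sufficiently large $\omega$ then follows because, in that regime, $\bm{\mathcal{L}}_\omega$ is a small perturbation of multiplication by $\mathsf{G}_\omega^\top$, whose discretisation inherits invertibility from the interpolation hypothesis on the basis.

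The main obstacle I expect is the bookkeeping in the third step: landing on exactly the factored matrix form $\tilde{\mathbf{f}}^\top\hat{\mathsf{G}}\mathbf{1}\hat{\mathsf{G}}^s\tilde{\mathbf{w}}$, rather than a coarser scalar envelope such as $\|\hat{\mathsf{G}}\|^{s+1}\|\tilde{\mathbf{f}}\|\|\tilde{\mathbf{w}}\|$, requires carefully tracking which side each $\mathsf{G}_\omega^{-\top}$ falls on during iteration and how the derivative in the recursion distributes via the product rule between $(\mathsf{G}_\omega^\top)^{-1}$ and $\mathbf{e}_k$. A secondary technical point is rigorously truncating the asymptotic series at order $s+1$; this follows from a variation-of-constants representation of the solution of $\bm{\mathcal{L}}_\omega\mathbf{e}=\mathbf{r}$, but requires the continuity of the entries of $\mathsf{G}_\omega^{-\top}$ on $[-1,1]$ that is ensured by the absence of poles in $\mathsf{G}_\omega$.
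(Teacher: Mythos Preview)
The paper does not prove this theorem: it is quoted verbatim as ``Thm.~4.1 in \cite{Olver2007}'' and used only as background to motivate the Levin method, with no proof or proof sketch supplied in the manuscript. There is therefore nothing in the paper to compare your proposal against.

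That said, your outline is a faithful reconstruction of the argument in Olver's original paper. One small correction: the identity
\[
I_\omega[\mathbf{f}]-\mathcal{Q}_\omega^{L,[\nu,s]}[\mathbf{f}]=\langle\mathbf{e}(1),\mathbf{w}(1)\rangle-\langle\mathbf{e}(-1),\mathbf{w}(-1)\rangle
\]
as you have written it presupposes the existence of an exact non-oscillatory solution $\mathbf{u}$ of $\bm{\mathcal{L}}_\omega\mathbf{u}=\mathbf{f}$, which the hypotheses do not directly guarantee. The cleaner starting point, and the one Olver uses, is to observe that $\langle\bm{\mathcal{L}}_\omega\mathbf{q},\mathbf{w}\rangle=\tfrac{\dd}{\dd x}\langle\mathbf{q},\mathbf{w}\rangle$, so the error equals $\int_{-1}^{1}\langle\mathbf{r},\mathbf{w}\rangle\,\dd x$ with $\mathbf{r}=\mathbf{f}-\bm{\mathcal{L}}_\omega\mathbf{q}$; one then integrates by parts repeatedly (equivalently, constructs your formal $\mathbf{e}_k$ and uses them as successive antiderivatives) to peel off factors of $(\mathsf{G}_\omega^\top)^{-1}$. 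This avoids invoking $\mathbf{u}$ altogether while leading to the same bookkeeping you describe.
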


\revisionsA{Note in Thm.~\ref{thm:matrix_asymptotics} $\hat{\mathsf{G}}=o(1)$ ensures that the error of $\mathcal{Q}_\omega^{L,[\nu,s]}[\mathbf{f}]$ decays as $\omega\rightarrow\infty$. Thus both of the above theorems imply} that the Levin method provides a quadrature rule that approximates the integral at frequency-independent cost as $\omega\rightarrow\infty$, and is therefore a suitable methodology for approximating integrals of the form \eqref{eqn:general_integral_levin} very efficiently. \revisionsA{Note that this decaying quadrature error as $\omega\rightarrow\infty$ is only achieved when $x_0=-1, x_{\nu+1}=1$ as in the above construction. \label{text:comment_pm1points}}

However, the linear collocation system \eqref{eqn:chebyshev_interpolation_system_general} is, in general, a dense $M(\nu+2s+2)\times M(\nu+2s+2)$-linear system and hence requires $\mathcal{O}(M^3(\nu+2s+2)^3)$ operations to solve directly. This is the point where banded spectral computations become useful as we shall see in the following section.

\section{The accelerated Levin--Clenshaw--Curtis method}\label{sec:accelerated_LCC_method}
The central observation which we exploit in this present work is that the action of certain differential operators on Chebyshev polynomial basis functions can be represented by banded matrices. This observation was recently exploited in the context of Filon moment computation by Maierhofer et al. \cite{maierhofer2021extended}. The central ingredient is given by the following simple identities:

\begin{lemma}[Eqs.~22.7.4 \& 22.8.3 in {\cite{abramowitz1965handbook}}]\label{lem:banded_operators_Chebyshev_polynomials}Let $\ChebyshevT_n$ denote the Chebyshev polynomial of the first kind of order $n$, and let us use the convention $\ChebyshevT_{-n}(x):=\ChebyshevT_n(x)$ for all $n\geq0$, then for any $n\geq \mathbb{Z}$:
	\begin{align*}
		x\ChebyshevT_n(x)&=\frac{1}{2}\ChebyshevT_{n-1}(x)+\frac{1}{2}\ChebyshevT_{n+1}(x),\,\,\text{and}\,\,
		(1-x^2)\ChebyshevT_n'(x)=\frac{n}{2}\ChebyshevT_{n-1}(x)-\frac{n}{2}\ChebyshevT_{n+1}(x).
	\end{align*}
	In particular, the actions of $x,(1-x^2)\dd/\dd x$ on $\left\{\ChebyshevT_n\right\}_{n=0}^\infty$ are both of bandwidth 3.
\end{lemma}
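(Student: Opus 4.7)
The plan is to prove both identities by passing to the trigonometric parametrisation $x=\cos\theta$ and exploiting $\ChebyshevT_n(\cos\theta)=\cos(n\theta)$, after which everything reduces to standard product-to-sum formulae. The convention $\ChebyshevT_{-n}=\ChebyshevT_n$ is exactly what is needed to keep the formulae uniform at $n=0$ (and, for the first identity, $n=\pm1$), because $\cos(-k\theta)=\cos(k\theta)$.

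First I would establish the three-term recurrence $x\ChebyshevT_n(x)=\tfrac12\ChebyshevT_{n-1}(x)+\tfrac12\ChebyshevT_{n+1}(x)$. Under $x=\cos\theta$ this is just the identity
\begin{align*}
\cos\theta\,\cos(n\theta)=\tfrac12\bigl[\cos((n-1)\theta)+\cos((n+1)\theta)\bigr],
\end{align*}
which is the product-to-sum formula; translating back to Chebyshev polynomials and invoking $\ChebyshevT_{-1}=\ChebyshevT_1$ (for the boundary case $n=0$) gives the stated recurrence for every $n\in\mathbb{Z}_{\geq 0}$, and indeed for every $n\in\mathbb{Z}$ once the sign convention is imposed.

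Next I would prove the differential identity $(1-x^2)\ChebyshevT_n'(x)=\tfrac{n}{2}\ChebyshevT_{n-1}(x)-\tfrac{n}{2}\ChebyshevT_{n+1}(x)$. Differentiating $\cos(n\theta)=\ChebyshevT_n(\cos\theta)$ in $\theta$ yields $-\sin\theta\,\ChebyshevT_n'(\cos\theta)=-n\sin(n\theta)$, hence
\begin{align*}
(1-x^2)\ChebyshevT_n'(x)=\sin^2\theta\,\ChebyshevT_n'(\cos\theta)=n\sin\theta\sin(n\theta),
\end{align*}
and the product-to-sum formula $\sin\theta\sin(n\theta)=\tfrac12[\cos((n-1)\theta)-\cos((n+1)\theta)]$ completes the identity. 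The $n=0$ case is trivial on both sides, and the convention $\ChebyshevT_{-n}=\ChebyshevT_n$ again handles the boundary.

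Finally, the bandwidth claim is an immediate consequence: in the basis $\{\ChebyshevT_n\}_{n=0}^\infty$ both operators $x\cdot$ and $(1-x^2)\tfrac{\dd}{\dd x}$ send $\ChebyshevT_n$ to a linear combination of $\ChebyshevT_{n-1}$ and $\ChebyshevT_{n+1}$ only, so their matrix representations are tridiagonal, i.e.\ of bandwidth $3$. There is no real obstacle here; the only subtlety worth flagging is the consistent use of $\ChebyshevT_{-n}=\ChebyshevT_n$ to fold the low-index terms into the same formula, which is precisely what allows the recurrences to be written as clean banded actions without separate edge cases.
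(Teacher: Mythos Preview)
Your proof is correct. The paper does not actually provide its own proof of this lemma; it simply cites the relevant formulae in Abramowitz \& Stegun \cite{abramowitz1965handbook} and states the result, so your trigonometric argument via $x=\cos\theta$ and the product-to-sum identities is a perfectly good (and standard) way to fill in the details.
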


\revisionsB{Note the use of similar types of identities for Chebyshev polynomials was also exploited in Keller's work \cite{keller2007method,keller1998indefinite,keller2012practical} and in earlier work by Lewanowicz \cite{lewanowicz1991new} who essentially constructed efficient methods for the approximation of Chebyshev coefficients of special functions obeying differential equations with polynomial coefficients.}

\subsection{The accelerated algorithm in the simple scalar case, $\mathbf{M=1,s=0}$}\label{sec:initial_case_m-1_s-1}
Before describing our framework of the accelerated Levin method for general integrals of the form \eqref{eqn:general_integral_levin} we begin by outlining the main ideas based on the slightly simpler type of integrals given in Example~\ref{ex:exponential_oscillator}. In this case, the Levin method requires us to solve the following differential equation by collocation
\begin{align}\label{eqn:Levin_differential_equation_with_g}
	\mathcal{L}_\omega u_1(x):=u_1'(x)+i\omega g'(x) u_1(x)=f(x), \quad x\in[-1,1],
\end{align}
where $g(x)$ is a polynomial of degree $d$. Let us, for the purpose of this section, focus on the case $s=0$, i.e. we seek an approximation $q_1(x)=\sum_{j=0}^{\nu+1}\alpha_j\psi_j(x)$ to $u_1$ for some $\nu\in\mathbb{N}$ and some basis functions $\{\psi_j\}_{j=0}^{\nu+1}$, by solving the following linear system:
\begin{align}\label{eqn:chebyshev_interpolation_system_general}
	\sum_{j=0}^{\nu+1}\alpha_j (\psi_j'(c_m)+i\omega g'(c_m)\psi_j(c_m))=f(c_m),\quad m=0,\dots,\nu+1,
\end{align}
where $-1=c_0<c_1<\cdots<c_{\nu+1}=1$ are the collocation points.

Our central observation is that one can use a Chebyshev polynomial basis for the approximate solution of \eqref{eqn:Levin_differential_equation_with_g} together with Lemma~\ref{lem:banded_operators_Chebyshev_polynomials} in order to achieve a significant acceleration in finding the coefficients $\bm{\alpha}$ as defined in \eqref{eqn:chebyshev_interpolation_system_general}. Ultimately, we will see that our method is able to compute the full approximation to the integral $I_\omega[f]$ in just $\mathcal{O}(\nu\log\nu+d^2\nu)$ operations, where we recall that $d$ is the polynomial degree of the phase function, $d=\deg g$. As is explained in the sequel, we require $\nu$ to be even and assume {$\nu\geq d+1$}. The latter assumption is made since we shall rely on the solution of a banded matrix system, with bandwidth $2d+3$, so in the case $d\geq \nu$ the matrix would be dense and yield no significant {speed-up} over the direct solution of \eqref{eqn:general_discrete_interpolation_conditions}. Our \textit{Ansatz} is, therefore,
\begin{align*}
	q_1(x)=\sum_{l=0}^{\nu+1}\alpha_l \ChebyshevT_l(x),
\end{align*}
and we use Clenshaw--Curtis collocation points $c_m=\cos(m\pi/(\nu+1)),\, 0\leq m\leq \nu+1$. Thus, we wish to find the coefficients $\bm{\alpha}$ from the following set of interpolation conditions
\begin{align}\label{eqn:general_discrete_interpolation_conditions}
	\mathsf{A}\bm{\alpha}=\mathbf{f},
\end{align}
where $\mathsf{A}_{mn}=\mathcal{L}_\omega \ChebyshevT_n(c_m), f_m=f(c_m)$, $m,n=0,\dots,\nu+1$, and here we {denote} by $\mathcal{L}_\omega$ the Levin differential operator as defined in \eqref{eqn:Levin_differential_equation_with_g}. As a first observation, we note that once $\bm{\alpha}$ is found, we can evaluate the Levin approximation to $I_\omega[f]$ using just $\mathcal{O}(\nu)$ additions. Indeed we have $\ChebyshevT_n(\pm1)=(\pm 1)^n$ (see \cite[Eq.~(2.3)]{Gao2017a}), and hence
\begin{align*}
	q_1(\pm1)=\sum_{j=0}^{\nu+1}(\pm 1)^{j}\alpha_j.
\end{align*}
To speed up the solution of \eqref{eqn:Levin_differential_equation_with_g} we will show that $\nu$ of the $\nu+2$ degrees of freedom in the equation can be found through the solution of a banded matrix system of bandwidth $2d+3$.

In order to do so we begin by transforming $\mathcal{L}_\omega$ to a form that has banded action on the Chebyshev polynomial basis. We recall Lemma~\ref{lem:banded_operators_Chebyshev_polynomials} and thus consider the differential operator
\begin{align*}
	\widetilde{\mathcal{L}}_\omega=(1-x^2)\mathcal{L}_\omega=(1-x^2)\frac{\dd}{\dd x}+i\omega (1-x^2)g'(x).
\end{align*}
The effect of replacing $\mathcal{L}_\omega$ by $\widetilde{\mathcal{L}}_\omega$ in the collocation equations \eqref{eqn:general_discrete_interpolation_conditions} is simply to multiply every row of the linear system by a factor of $(1-c_m^2)$, i.e. $\widetilde{\mathsf{A}}_{mn}=(1-c_m^2)\mathsf{A}_{mn},\, m,n=0,\dots,\nu+1$, where $\widetilde{\mathsf{A}}_{mn}=\widetilde{\mathcal{L}}_\omega \ChebyshevT_n(c_m)$, and $\tilde{f}_m=(1-c_m^2)f_m, m=0,\dots,\nu+1$. Thus the solution $\bm{\alpha}$ of  \eqref{eqn:general_discrete_interpolation_conditions} must also satisfy the $(\nu+2)\times (\nu+2)$ matrix system
\begin{align}\label{eqn:modified_discrete_interpolation_conditions}
	\widetilde{\mathsf{A}}\bm{\alpha}=\tilde{\mathbf{f}}.
\end{align}
By construction (since $c_0=1,c_{\nu+1}=-1$) we have $\tilde{f}_{0}=\tilde{f}_{\nu+1}=0$ and $\widetilde{\mathsf{A}}_{0n}=\widetilde{\mathsf{A}}_{(\nu+1)n}=0$ for all $0\leq n\leq \nu+1$. Hence the linear system \eqref{eqn:modified_discrete_interpolation_conditions} is ill-posed. We can nevertheless work with the system in the following way: Let $\mathsf{P}_{\nu}$ be projection onto the middle $\nu$ coordinates, i.e. $\mathsf{P}_{\nu}:(x_0,\dots,x_{\nu+1})\mapsto(0,x_1,\dots,x_{\nu},0)$. \revisionsA{By slight abuse of notation we will in the following sometimes use $\mathsf{P}_{\nu}\mathbf{x}$ to denote $(x_1,\dots,x_{\nu})$ when the dimensionality of the vector is clear from context.} Consider the linear system
\begin{align}\label{eqn:middle_system_Levin_discrete}
	\mathsf{P}_{\nu} \widetilde{\mathsf{A}}\mathsf{P}_{\nu}\bm{\alpha}_0=\mathsf{P}_{\nu} \tilde{\mathbf{f}}. 
\end{align}
This \revisionsA{is equivalent (through multiplication} of each row by a nonzero constant) to the interpolation conditions
\begin{align}\label{eqn:middle_system_Levin_discrete_interpolation_form}
	\sum_{n=1}^\nu (\bm{\alpha}_0)_n\mathcal{L}_\omega\ChebyshevT_n(c_m)=f(c_m),\quad m=1,\dots, \nu.
\end{align}
Our central observation is that we can solve \eqref{eqn:middle_system_Levin_discrete} very efficiently:
\begin{proposition}\label{prop:efficient_inversion_of_subsystem}
	For $\omega$ sufficiently large, the system \eqref{eqn:middle_system_Levin_discrete} has a unique solution $\bm{\alpha}_0\in \{\mathbf{x}\in\mathbb{C}^{\nu+2}\,\big|\,x_0=x_{\nu+1}=0\}$. Moreover, whenever such a solution exists, we can solve \eqref{eqn:middle_system_Levin_discrete} using $\mathcal{O}(\nu\log\nu+d^2\nu)$ operations, by the application of one discrete cosine transform (DCT-I) and the solution of a banded matrix system of bandwidth $2d+3$.
\end{proposition}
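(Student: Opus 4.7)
The plan is to re-express~\eqref{eqn:middle_system_Levin_discrete} in the Chebyshev coefficient basis, where the action of $\widetilde{\mathcal{L}}_\omega$ becomes banded, and to show the reduction costs only one DCT-I plus a banded solve. Unique solvability for large $\omega$ I would treat separately as the main obstacle.

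Concretely, I would first introduce the infinite matrix $\mathsf{B}$ defined by $\widetilde{\mathcal{L}}_\omega\ChebyshevT_n=\sum_{k\ge0}\mathsf{B}_{kn}\ChebyshevT_k$. Lemma~\ref{lem:banded_operators_Chebyshev_polynomials} gives bandwidth $3$ for $(1-x^2)\frac{\dd}{\dd x}$, while iterating $x\ChebyshevT_n=\tfrac12(\ChebyshevT_{n-1}+\ChebyshevT_{n+1})$ gives bandwidth $2(d+1)+1=2d+3$ for Chebyshev multiplication by the polynomial $(1-x^2)g'(x)$ of degree $d+1$. Hence $\mathsf{B}$ is banded of bandwidth $2d+3$, and $\widetilde{\mathsf{A}}=\mathsf{C}\mathsf{B}$ with $\mathsf{C}_{mk}=\ChebyshevT_k(c_m)$ the Chebyshev evaluation matrix at the Clenshaw--Curtis nodes. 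Applying the (inverse) DCT-I $\mathsf{F}$ on sample space produces on the right, in $\mathcal{O}(\nu\log\nu)$ operations, the vector $\tilde{\mathbf{f}}^{\mathrm{cheb}}=\mathsf{F}\tilde{\mathbf{f}}$ of Chebyshev coefficients of the degree-$(\nu+1)$ interpolant $p_I$ of $(1-x^2)f$, and on the left produces $\widehat{\mathsf{B}}\bm{\alpha}_0$, where the aliasing identity $\ChebyshevT_j(c_m)=\ChebyshevT_{2(\nu+1)-j}(c_m)$ together with $\bm{\beta}=\mathsf{B}\bm{\alpha}_0$ being supported on $\{0,\dots,\nu+d+1\}$ yields
\begin{align*}
\widehat{\mathsf{B}}_{kn}=\mathsf{B}_{kn}+\mathsf{B}_{2(\nu+1)-k,\,n}\text{ for }k\in\{\nu-d+1,\dots,\nu\},\qquad\widehat{\mathsf{B}}_{kn}=\mathsf{B}_{kn}\text{ otherwise.}
\end{align*}

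The key structural claim is that the $\nu\times\nu$ submatrix $\widehat{\mathsf{B}}^{\mathrm{mid}}$ on interior indices $k,n\in\{1,\dots,\nu\}$ is banded of bandwidth $2d+3$: for each affected row $k\in\{\nu-d+1,\dots,\nu\}$ the band condition on $\mathsf{B}$ forces the aliased contribution $\mathsf{B}_{2(\nu+1)-k,n}$ to be supported on $n\in[2\nu+1-k-d,\nu]$, which is contained in the banded range $[k-d-1,k+d+1]$ of row $k$. To close the reduction I would observe that the two endpoint equations at $k=0$ and $k=\nu+1$ in the transformed system are implied by the interior ones together with the identities $p_I(\pm1)=0=\widetilde{\mathcal{L}}_\omega q_1(\pm1)$, since $\nu$ being even makes the $2\times2$ system recovering $(\widehat{\beta}_0,\widehat{\beta}_{\nu+1})$ from these identities non-singular (its determinant is $(-1)^{\nu+1}-1=-2$). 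Thus~\eqref{eqn:middle_system_Levin_discrete} is equivalent to the $\nu\times\nu$ banded system $\widehat{\mathsf{B}}^{\mathrm{mid}}\bm{\alpha}_0=\tilde{\mathbf{f}}^{\mathrm{cheb,mid}}$, solvable by banded LU in $\mathcal{O}((2d+3)^2\nu)=\mathcal{O}(d^2\nu)$ operations; combined with the DCT-I this gives the claimed $\mathcal{O}(\nu\log\nu+d^2\nu)$ total cost.

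For unique solvability as $\omega\to\infty$ I would write $\widehat{\mathsf{B}}^{\mathrm{mid}}=\ii\omega\,\mathsf{M}_0+\mathsf{M}_1$ with $\mathsf{M}_0$ the restricted and aliased Chebyshev representation of multiplication by $(1-x^2)g'(x)$, and conclude via a standard perturbation/Neumann-series argument once $\mathsf{M}_0$ is shown to have trivial kernel. I expect this last step to be the main obstacle: although $g'$ is non-vanishing on $[-1,1]$, the extra factor $(1-x^2)$ vanishes at the endpoints and interacts with the aliasing, so proving non-singularity of $\mathsf{M}_0$ on the interior Chebyshev modes will likely require either a direct structural argument on the banded matrix or a reduction to the interior Chebyshev interpolation matrix arising as the leading-order part of the original Levin collocation system.
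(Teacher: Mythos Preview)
Your efficiency argument is essentially the paper's: your $\widehat{\mathsf{B}}$ is the paper's $\widetilde{\mathsf{B}}$, and your $2\times2$ endpoint argument (determinant $(-1)^{\nu+1}-1=-2$ for even $\nu$) is precisely the content of the paper's Claim~\ref{claim6:special_properties_of_DCTI_in_this_space}, which is what justifies the equivalence $\mathsf{P}_\nu\widetilde{\mathsf{A}}\mathsf{P}_\nu\bm{\alpha}_0=\mathsf{P}_\nu\tilde{\mathbf{f}}\iff\mathsf{P}_\nu\widetilde{\mathsf{B}}\mathsf{P}_\nu\bm{\alpha}_0=\mathsf{P}_\nu\mathsf{C}^{-1}\tilde{\mathbf{f}}$.

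For unique solvability, your primary route---proving the leading-order coefficient-basis matrix $\mathsf{M}_0$ non-singular---is, as you suspect, obstructed by the $(1-x^2)$ factor. The paper sidesteps this entirely via your second suggested route: it returns to the equivalent interior collocation form~\eqref{eqn:middle_system_Levin_discrete_interpolation_form}, i.e.\ $\sum_{n=1}^{\nu}(\bm{\alpha}_0)_n\mathcal{L}_\omega\ChebyshevT_n(c_m)=f(c_m)$ for $m=1,\dots,\nu$, where there is no $(1-x^2)$ factor. Dividing row $m$ by $\ii\omega g'(c_m)\neq0$ puts the system in the form $(\mathsf{I}+\tfrac{1}{\ii\omega}\mathsf{F}\widetilde{\mathsf{D}})$ composed with interior Chebyshev evaluation, and a Neumann series reduces everything to the invertibility of $\mathsf{P}_\nu\mathsf{C}\mathsf{P}_\nu$ on $\{x_0=x_{\nu+1}=0\}$. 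That last invertibility is supplied by the \emph{same} $2\times2$ argument you already used for the endpoint reduction (Claim~\ref{claim6:special_properties_of_DCTI_in_this_space}), together with the explicit DCT-I null vectors $\mathbf{y}_1=(\tfrac12,1,\dots,1,\tfrac12)$ and $\mathbf{y}_2=(\tfrac12,-1,1,\dots,1,-\tfrac12)$. So the ``main obstacle'' dissolves once you argue invertibility in the physical/collocation basis rather than the coefficient basis, and your endpoint lemma does double duty.
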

\begin{proof}
	We recall $\widetilde{\mathsf{A}}_{mn}=\widetilde{\mathcal{L}}_\omega \ChebyshevT_n(c_m)$. By Lemma~\ref{lem:banded_operators_Chebyshev_polynomials} the action of $\widetilde{\mathcal{L}}_\omega$ on $\{\ChebyshevT_n\}_{n=0}^\infty$ is represented by a banded (infinite) matrix $\mathsf{B}$ with bandwidth $2d+3$ such that
	\begin{align}\label{eqn6:banded_matrix_representation_tildeL}
		\widetilde{\mathcal{L}}_\omega \ChebyshevT_n(x)=\sum_{k=\max\{0,n-(d+1)\}}^{n+d+1}\mathsf{B}_{kn}\ChebyshevT_k(x).
	\end{align}
	Therefore we can express the matrix $\widetilde{\mathsf{A}}$ in the form
	\begin{align*}
		\widetilde{\mathsf{A}}_{mn}=\sum_{k=\max\{0,n-(d+1)\}}^{n+d+1}\mathsf{B}_{kn} \ChebyshevT_k(c_m),\quad m,n=0,\dots, \nu+1.
	\end{align*}
	To simplify this further note that for $0\leq l,m\leq \nu+1$ we have
	\begin{align}\label{eqn:T_nu+1+l_equals_T_nu+1-l}
		\ChebyshevT_{\nu+1+l}(c_m)&=\cos\left(\frac{(\nu+1+l)m}{\nu+1}\pi\right)=(-1)^m\cos\left(\frac{lm}{\nu+1}\pi\right)\\
		&=(-1)^m\cos\left(\frac{(-l)m}{\nu+1}\pi\right)=\ChebyshevT_{\nu+1-l}(c_m).
	\end{align}
	Thus let us define $\widetilde{\mathsf{B}}$ as a $(\nu+2)\times(\nu+2)$ matrix by
	\begin{align}\label{eqn:def_of_tilde{B}_1d_case}
		\widetilde{\mathsf{B}}_{nm}=\begin{cases}
			\mathsf{B}_{nm},&0\leq n<\nu-d, n=\nu+1,\\
			\mathsf{B}_{nm}+\mathsf{B}_{(2\nu+2-n)m},&\nu-d\leq n\leq \nu.
		\end{cases}
	\end{align}
	Then we have, since $\nu>d$,
	\begin{align*}
		\widetilde{\mathsf{A}}_{mn}=\sum_{k=\max\{0,n-(d+1)\}}^{n}\mathsf{C}_{mk}\widetilde{\mathsf{B}}_{kn},
	\end{align*}
	where $\mathsf{C}_{mk}=\ChebyshevT_{k}(c_m)$. We now observe that the action of $\mathsf{C}$ essentially represents a discrete cosine transform (DCT-I): Let $\mathbf{x}\in\mathbb{C}^{\nu+2}$ then
	\begin{align*}
		\left(\mathsf{C}\mathbf{x}\right)_m=\sum_{n=0}^{\nu+1}\mathsf{C}_{mn}x_n=\sum_{n=0}^{\nu+1}\cos\left(\frac{mn\pi}{\nu+1}\right)x_n=\sideset{}{''}\sum_{n=0}^{\nu+1}\cos\left(\frac{mn\pi}{\nu+1}\right)\tilde{x}_n=:\left(\mathcal{C}_{\nu+1}\tilde{\mathbf{x}}\right),
	\end{align*}
	where we denoted by $\mathcal{C}_{\nu+1}$ the DCT-I on the space $\mathbb{C}^{\nu+2}$. Here $\sideset{}{''}\sum_{n=0}^{\nu+1}$ means that for $j=0$ and $j=\nu+1$ the terms in the sum are halved, and we took
	\begin{align*}
		\tilde{x}_n=\begin{cases}
			x_n,& 1\leq n\leq \nu,\\
			2x_n,& n=0,\nu+1.
		\end{cases}
	\end{align*}
	The inverse of $\mathcal{C}_{\nu+1}$ is again a DCT-I, $\mathcal{C}_{\nu+1}^{-1}=2/(\nu+1)\mathcal{C}_{\nu+1}$. It is well-known that the application of $\mathcal{C}_{\nu+1}$ can be computed efficiently in $\mathcal{O}(\nu\log\nu)$ operations \cite{trefethen2008gauss}, so we see that the action of $\mathsf{C}$ and $\mathsf{C}^{-1}$ can both be computed in $\mathcal{O}(\nu\log\nu)$ operations.
	
	Let us now prove the first part of the Lemma: First of all, \revisionsA{\eqref{eqn:middle_system_Levin_discrete} is equivalent to \eqref{eqn:middle_system_Levin_discrete_interpolation_form} which can be written in the form
	\begin{align*}
		\sum_{n=1}^\nu \left(\sum_{k=1}^{\nu}\left(\mathsf{I}+\frac{1}{i\omega }\mathsf{F}\widetilde{\mathsf{D}}\right)_{nk}\ChebyshevT_k(c_m)\right)(\bm{\alpha}_0)_n=\frac{f(c_m)}{i\omega g'(c_m)},\quad m=1,\dots, \nu.
	\end{align*}
		where $\mathsf{F}=\mathrm{diag}(1/g(x_1),\dots,1/g(x_\nu))$, and $\widetilde{\mathsf{D}}$ corresponds to the differentiation matrix of the Chebychev polynomials. For $\omega$ sufficiently large, the Neumann series of $(i\omega)^{-1}\mathsf{F}\widetilde{\mathsf{D}}$ converges and thus $\left(\mathsf{I}+\frac{1}{i\omega }\mathsf{F}\widetilde{\mathsf{D}}\right)$ is invertible. Therefore the system \eqref{eqn:middle_system_Levin_discrete} is equivalent to}
	\begin{align*}
		\sum_{n=1}^{\nu}\alpha_n \ChebyshevT_n(c_m)=\frac{f(c_m)}{i\omega g'(c_m)}+\mathcal{O}(\omega^{-2}),\quad m=0,\dots,\nu+1, \quad\revisionsA{\text{as\ }\omega\rightarrow\infty,}
	\end{align*}
	i.e. $\mathsf{P}_{\nu}\mathsf{C}\mathsf{P}_{\nu}\bm{\alpha}_0=\mathsf{P}_{\nu}\mathbf{h}+\mathcal{O}(\omega^{-2})$. Thus to show that \eqref{eqn:middle_system_Levin_discrete} has a unique solution $\bm{\alpha}_0\in \{\mathbf{x}\in\mathbb{C}^{\nu+2}\,\big|\,x_0=x_{\nu+1}=0\}$ \revisionsA{for sufficiently large $\omega$}, it is sufficient to prove that the equation 
	\begin{align}\label{eqn:sufficient_condition_to_invert_middle_DCTI}
		\mathsf{P}_{\nu}\mathsf{C}\mathbf{y}=\mathsf{P}_{\nu}\mathbf{z},
	\end{align}
	has a solution $\mathbf{y}\in \{\mathbf{x}\in\mathbb{C}^{\nu+2}\,\big|\,x_0=x_{\nu+1}=0\}$ for all $\mathbf{z}\in\mathbb{C}^{\nu+2}$. For this we will exploit the following convenient property of the matrix $\mathsf{C}$:
	\begin{claim}\label{claim6:special_properties_of_DCTI_in_this_space}If $\mathbf{x},\mathbf{y}\in \mathbb{C}^{\nu+2}$ are such that $(\mathsf{I}-\mathsf{P}_{\nu})\mathsf{C} \mathbf{x}=(\mathsf{I}-\mathsf{P}_{\nu})\mathsf{C} \mathbf{y}=\mathbf{0}$, where $\mathsf{I}$ is the $(\nu+2)\times(\nu+2)$ identity matrix, then
		\begin{align*}
			\mathsf{P}_{\nu}\mathbf{x}=\mathsf{P}_{\nu}\mathbf{y}\quad\Longleftrightarrow\quad \mathsf{P}_{\nu} \mathsf{C} \mathbf{x}= \mathsf{P}_{\nu}\mathsf{C}\mathbf{y}.
		\end{align*}
	\end{claim}
	\begin{subproof}[Proof of Claim~\ref{claim6:special_properties_of_DCTI_in_this_space}] We know that the leading and the bottom rows of $\mathsf{C}$ take the forms $C_{0,n}=1,C_{\nu+1,n}=(-1)^n,\, n=0,\dots, \nu+1$. Thus if $(\mathsf{I}-\mathsf{P}_{\nu})\mathsf{C}\mathbf{x}=\mathbf{0}$, then
		\begin{align*}
			x_0+x_{\nu+1}&=-\sum_{n=1}^\nu x_n,\quad x_0-x_{\nu+1}=-\sum_{n=1}^\nu (-1)^nx_n,
		\end{align*}
		and so $x_0,x_{\nu+1}$ are uniquely determined by the remaining entries of $\mathbf{x}$. Therefore we have
		\begin{align*}
			\mathsf{P}_{\nu}\mathbf{x}=\mathsf{P}_{\nu}\mathbf{y}\quad\Longrightarrow\quad \mathbf{x}=\mathbf{y}\quad\Longrightarrow\quad \mathsf{P}_{\nu} \mathsf{C} \mathbf{x}= \mathsf{P}_{\nu}\mathsf{C}\mathbf{y}.
		\end{align*}
		The implication in the other direction follows, by recalling that $\mathsf{P}_{\nu}\mathsf{C}\mathbf{x}=\mathbf{0}$, thus:
		\begin{align*}
			\mathsf{P}_{\nu} \mathsf{C} \mathbf{x}= \mathsf{P}_{\nu}\mathsf{C}\mathbf{y}\,\,\,\Longrightarrow\,\,\, \mathsf{C}\mathbf{x}=\mathsf{P}_{\nu}\mathsf{C}\mathbf{x}=\mathsf{P}_{\nu}\mathsf{C}\mathbf{y}=\mathsf{C}\mathbf{y}\,\,\,\Longrightarrow\,\,\,\mathbf{x}=\mathbf{y}\,\,\,\Longrightarrow\,\,\, \mathsf{P}_{\nu}\mathbf{x}=\mathsf{P}_{\nu}\mathbf{y},
		\end{align*}
		where the penultimate implication follows since $\mathsf{C}$ is invertible.
	\end{subproof}
Now, we recall that $\mathsf{C}$ is (up to rescaling of the first and final coordinates) a DCT-I, thus one can easily check that
	\begin{align}\label{eqn:def_of_y_j}
		\mathbf{y}_1&=(\tfrac12,1,1,1,\dots,1,1,\tfrac12),\\
		\mathbf{y}_2&=(\tfrac12,-1,1,-1,\dots,-1,1,-\tfrac12),
	\end{align}
	are such that $\mathsf{P}_{\nu}\mathsf{C}\mathbf{y}_j=\mathbf{0},$ $j=1,2$. The final coordinate of $\mathbf{y}_2$ has negative sign because $\nu$ is even. Therefore, whenever a solution $\mathbf{y}_0\in\mathbb{C}^{\nu+2}$ of \eqref{eqn:sufficient_condition_to_invert_middle_DCTI} exists, we can find a linear combination of $\mathbf{y}_0,\mathbf{y}_1,\mathbf{y}_2$ which is in $\mathbf{z}\in\{\mathbf{x}\in\mathbb{C}^{\nu+2}\,\big|\,x_0=x_{\nu+1}=0\}$ and also solves \eqref{eqn:sufficient_condition_to_invert_middle_DCTI}. By Claim~\ref{claim6:special_properties_of_DCTI_in_this_space}, one such solution is $\mathbf{y}_0=\mathsf{C}^{-1}\mathsf{P}_{\nu} \mathbf{z}$, thus completing the proof of the first part of the lemma.
	
	Now let us show how we can solve \eqref{eqn:middle_system_Levin_discrete} efficiently whenever a solution exists. By Claim~\ref{claim6:special_properties_of_DCTI_in_this_space} the system \eqref{eqn:middle_system_Levin_discrete} is equivalent to 
	\begin{align}\label{eqn6:banded_matrix_system}
		\mathsf{P}_{\nu} \widetilde{\mathsf{B}}\mathsf{P}_{\nu}\bm{\alpha}_0=\mathsf{P}_{\nu}\mathsf{C}^{-1}\tilde{\mathbf{f}}.
	\end{align}
	Now we note $\mathsf{P}_{\nu}\widetilde{\mathsf{B}}\mathsf{P}_{\nu}$ is a banded matrix of bandwidth $2d+3$, so a solution of \eqref{eqn6:banded_matrix_system} can be found, using Gaussian elimination, in $\mathcal{O}(d^2\nu)$ operations. Thus to find $\bm{\alpha}_0$, we need to apply a single DCT-I and then solve \eqref{eqn6:banded_matrix_system}, i.e. we incur an overall cost of $\mathcal{O}(\nu\log\nu+d^2\nu)$ operations.
\end{proof}
Since $\widetilde{A}_{mn}=(1-c_m^2)A_{mn},\, m,n=0,\dots,\nu+1,$ we therefore found $\bm{\alpha}_0$ such that
\begin{align*}
	\mathsf{P}_{\nu}	\mathsf{A}\bm{\alpha}_0=\mathsf{P}_{\nu} \mathbf{f}.
\end{align*}
By construction $\mathrm{Nullity}(\widetilde{\mathsf{A}})=\mathrm{Nullity}(\mathsf{A})+2$, therefore, provided the collocation problem \eqref{eqn:general_discrete_interpolation_conditions} is soluble for all $f$, we know that there are two linearly independent vectors $\mathbf{v}_1,\mathbf{v}_2\in\mathbb{C}^{\nu+2}$ such that
\begin{align}\label{eqn:desired_null_solutions}
	\mathsf{P}_{\nu}\mathsf{A}\mathbf{v}_j=\mathbf{0},\quad j=1,2.
\end{align}
We can construct those as follows: Let $\mathbf{e}_0=(1,0,\dots,0)^\top$ and $\mathbf{e}_{\nu+1}=(0,\dots,0,1)^\top$ then by the same process as in Prop.~\ref{prop:efficient_inversion_of_subsystem} we can find $\tilde{\mathbf{v}}_1,\tilde{\mathbf{v}}_2\in \{\mathbf{x}\in\mathbb{C}^{\nu+2}\,\big|\,x_0=x_{\nu+1}=0\}$ with
\begin{align}\label{eqn:linear_system_for_elements_of_kernel}
	\mathsf{P}_{\nu} \mathsf{A}\mathsf{P}_{\nu}\tilde{\mathbf{v}}_1=-\mathsf{P}_{\nu}\mathsf{A}\mathbf{e}_0, \quad \mathsf{P}_{\nu} \mathsf{A}\mathsf{P}_{\nu}\tilde{\mathbf{v}}_2=-\mathsf{P}_{\nu}\mathsf{A}\mathbf{e}_{\nu+1}.
\end{align}
We note that the first and last columns of $\mathsf{A}$ have at least one nonzero entry in a row with index $1\leq m\leq\nu$, so $\mathsf{P}_{\nu}\mathsf{A}\mathbf{e}_0,\mathsf{P}_{\nu}\mathsf{A}\mathbf{e}_{\nu+1}\neq\mathbf{0}$. The vectors $\mathbf{v}_{1}=\mathbf{e}_0+\tilde{\mathbf{v}}_1,\mathbf{v}_{2}=\mathbf{e}_{\nu+1}+\tilde{\mathbf{v}}_2$ are then clearly linearly independent (since $\mathbf{e}_0^\top\mathbf{v}_2=0\neq\mathbf{e}_0^\top\mathbf{v}_1$), and they satisfy \eqref{eqn:desired_null_solutions}. The computation of the vectors takes again $\mathcal{O}(\nu\log\nu+d^2\nu)$ operations, as per Prop.~\ref{prop:efficient_inversion_of_subsystem}. We can finally find $\bm{\alpha}$, the solution to \eqref{eqn:general_discrete_interpolation_conditions}, by solving the remaining  linear system:
\begin{align}\label{eqn:remaining_2x2_system}
	\delta_1\mathsf{A}\mathbf{v}_1+\delta_2\mathsf{A}\mathbf{v}_2=\mathbf{f}-\mathsf{A}\bm{\alpha}_0.
\end{align}
Recall that by construction of $\mathbf{v}_{1},\mathbf{v}_2,\bm{\alpha}_0,$ the rows $m=1,\dots,\nu$ in \eqref{eqn:remaining_2x2_system} are trivially satisfied ($0=0$), and so this reduces to a $2\times 2$ linear system for the coefficients $\delta_1,\delta_2$:
\begin{align}\label{eqn:final_2x2system}
	\begin{pmatrix}
		(\mathsf{A}\mathbf{v}_1)_0&(\mathsf{A}\mathbf{v}_2)_0\\
		(\mathsf{A}\mathbf{v}_1)_{\nu+1}&(\mathsf{A}\mathbf{v}_2)_{\nu+1}
	\end{pmatrix}
	\begin{pmatrix}
		\delta_1\\
		\delta_2
	\end{pmatrix}=\begin{pmatrix}
		f_0-(\mathsf{A}\bm{\alpha}_0)_0\\
		f_{\nu+1}-(\mathsf{A}\bm{\alpha}_0)_{\nu+1}
	\end{pmatrix}.
\end{align}
The coefficients in this system can be found in $\mathcal{O}(\nu)$ operations (since for {example} $(\mathsf{A}\mathbf{v}_1)_0=\sum_{n=0}^{\nu+1}\mathsf{A}_{0n}(\mathbf{v}_1)_n$). Furthermore, because $\mathsf{A}$ is invertible, \eqref{eqn:remaining_2x2_system} must have a unique solution for $\delta_1,\delta_2$. This completes our construction of $\bm{\alpha}$, and hence the computation of $\mathcal{Q}^{\mathrm{L},[\nu]}_\omega[f]$.
\subsubsection*{{An} algorithm for the efficient construction of the Levin method ($\mathbf{M=1,s=0}$)}
Let us briefly summarise the above algorithm for constructing the Levin method in the present case in $\mathcal{O}(\nu\log\nu +d^2\nu)$ operations:
\begin{enumerate}
	\item Solve \eqref{eqn:middle_system_Levin_discrete} using the method described in Prop.~\ref{prop:efficient_inversion_of_subsystem}, to find $\bm{\alpha}_0\in \{\mathbf{x}\in\mathbb{C}^{\nu+2}\,\big|\,x_0=x_{\nu+1}=0\}$ with
	\begin{align*}
		\mathsf{P}_{\nu}\mathsf{A} \bm{\alpha}_0=\mathsf{P}_{\nu}\mathbf{f}.
	\end{align*}
	\item Solve \eqref{eqn:linear_system_for_elements_of_kernel} using the method described in Prop.~\ref{prop:efficient_inversion_of_subsystem}, to find two linearly independent vectors $\mathbf{v}_1,\mathbf{v}_2$ with \begin{align*}
		\mathsf{P}_{\nu}\mathsf{A}\mathbf{v}_j=\mathbf{0},\quad j=1,2.
	\end{align*}
	\item Compute the coefficients in \eqref{eqn:final_2x2system} and solve the resulting $2\times 2$ linear system for $\delta_1,\delta_2$.
	\item Finally, let $\bm{\alpha}=\bm{\alpha}_0+\delta_1\mathbf{v}_1+\delta_2\mathbf{v}_2,$ and compute
	\begin{align*}
		\mathcal{Q}_{\omega}^{{L,[\nu]}}[f]=\sum_{n=0}^{\nu+1}\alpha_n e^{i\omega g(1)}-\sum_{n=0}^{\nu+1}(-1)^n\alpha_n e^{i\omega g(-1)}.
	\end{align*}
\end{enumerate}
\subsection{The inclusion of endpoint derivative values in the scalar case, $\mathbf{M=1,s\geq 1}$}\label{sec:including_endpoint_derivative_values}
Next we consider the case $s\geq 1, M=1$, still requiring that our integral is of the form given in Example~\ref{ex:exponential_oscillator}. In this case only a slight modification of the aforementioned algorithm is required in order to solve the collocation equations \eqref{eqn:general_levin_collocation_equations} in $\mathcal{O}((s+1)(\nu\log\nu+d^2\nu)+s^3)$ operations. Let us begin by introducing the projection $\mathsf{P}_{\nu+2}:\mathbb{C}^{\nu+2s+1}\rightarrow\mathbb{C}^{\nu+1}$
\begin{align}\label{eqn:definition_P_nu+2}
	\mathsf{P}_{\nu+2}(x_0,\dots,x_{\nu+1},x_{\nu+2},\dots,x_{\nu+2s+1})&=(x_0,\dots,x_{\nu+1}).
\end{align}
This time our basis is $\{\ChebyshevT_n\}_{n=0}^{\nu+2s+1}$ and the interpolation conditions arising in the Levin method are
\begin{align}\label{eqn:intermediate_conditions_levin1d}
	\sum_{n=0}^{\nu+2s+1} \alpha_n\mathcal{L}_\omega\ChebyshevT_{n}(c_m)=f(c_m), \quad m=0,\dots, \nu+1
\end{align}
where $\mathcal{L}_\omega=\dd/\dd x+i\omega g'(x)$, and, in addition, for $l=1,\dots,s,$ we have the conditions
\begin{align}\label{eqn:differential_conditions_levin1d}
	\sum_{n=0}^{\nu+2s+1} \alpha_n\left[\frac{\dd^l}{\dd x^l}\mathcal{L}_\omega\ChebyshevT_n\right]_{x=\pm1}=\left[\frac{\dd^lf}{\dd x^l}\right]_{x=\pm 1}.
\end{align}
To see how to solve these collocation equations efficiently for $\bm{\alpha}=(\alpha_0,\dots,\alpha_{\nu+2s+1})$ let us begin by defining the auxiliary vectors $\mathbf{h}^{{[j]}}\in\mathbb{C}^{\nu+2},\,j=1,\dots,s$ by
\begin{align*}
	h^{{[j]}}_m=\mathcal{L}_\omega\ChebyshevT_{\nu+j+1}(c_m),\quad m=0,\dots,\nu+1,j=1,\dots,2s.
\end{align*}
By Lemma~\ref{lem:banded_operators_Chebyshev_polynomials}, for $1\leq m\leq \nu$, the entries $h_m^{{[j]}}$ can be found efficiently using at most $\mathcal{O}(2d+1)$ operations each (since $\ChebyshevT_n'(c_m)=(n\ChebyshevT_{n-1}/2-n\ChebyshevT_{n+1}/2)/(1-c_m^2)$). The remaining entries with $m=0,\nu+1$ also require no more then $\mathcal{O}(2d+1)$ operations to compute since we have the following explicit expressions (proved in \cite[Eq.~(2.3)]{Gao2017a}):
\begin{align}\label{eqn:explicit_expressions_chebyshev_polys_at_pm1}
	\left[\frac{\dd^l}{\dd x^l}\ChebyshevT_n\right]_{x=\pm1}&=(\pm 1)^{n-l}\frac{2^l l! n(n+l-1)!}{(2l)!(n-l)!}, \quad \text{for\ }0\leq l\leq n \text{\ and\ }n+l\geq 1.
\end{align}
Note these expressions can be efficiently evaluated using the following recursive relation:
\begin{align*}
	\left[\frac{\dd^l}{\dd x^l}\ChebyshevT_n\right]_{x=\pm1}= \pm \frac{n^2-(l-1)^2}{2l-1} \left[\frac{\dd^{l-1}}{\dd x^{l-1}}\ChebyshevT_n\right]_{x=\pm1}
\end{align*}
Hence the overall cost of computing the vectors $\mathbf{h}^{{[j]}}, j=1,\dots,2s,$ is no more than $\mathcal{O}(ds(\nu+2))$ operations.

Once these vectors are computed we can solve $\mathsf{A}\bm{\beta}=\mathbf{f}$ and
\begin{align*}
	\mathsf{A}\bm{\beta}^{{[j]}}=-\mathbf{h}^{{[j]}},
\end{align*}
for each $j=1,\dots,2s$ in $\mathcal{O}(\nu\log\nu+d^2\nu)$ operations using the algorithm described in \S\ref{sec:initial_case_m-1_s-1}. Here we denoted by $\mathsf{A}$ the $(\nu+2)\times(\nu+2)$-matrix corresponding to the linear system \eqref{eqn:intermediate_conditions_levin1d}, i.e.
\begin{align*}
	\mathsf{A}_{mn}=\mathcal{L}_\omega \ChebyshevT_{n}(c_m),\quad m,n=0,\dots,\nu+1.
\end{align*}
We note that the conditions \eqref{eqn:intermediate_conditions_levin1d} are equivalent to
\begin{align*}
\sum_{n=0}^{\nu+1} \alpha_n\mathcal{L}_\omega\ChebyshevT_{n}(c_m)=f(c_m)-\sum_{j=1}^{2s}\alpha_{\nu+1+j}h^{{[j]}}_m,\quad m=0,\dots,\nu+1
\end{align*}
which can be written {in the form}
\begin{align*}
	\mathsf{A}\mathsf{P}_{\nu+2}\bm{\alpha}=\mathbf{f}-\sum_{j=1}^{2s}\alpha_{\nu+1+j}\mathbf{h}^{{[j]}}.
\end{align*}
Therefore (for sufficiently large $\omega$ we know that $\mathsf{A}$ is invertible)
\begin{align*}
	\mathsf{P}_{\nu+2}\bm{\alpha}=\bm{\beta}+\sum_{j=1}^{2s}\alpha_{\nu+1+j}\bm{\beta}^{{[j]}}.
\end{align*}
Thus we have determined the first $\nu+2$ coefficients $\alpha_0,\dots,\alpha_{\nu+1}$ in terms of the remaining $2s$ unknowns $\alpha_{\nu+2},\dots,\alpha_{\nu+2s+1}$. These latter coefficients can be found from the auxiliary $2s\times 2s$ linear system given by the conditions \eqref{eqn:differential_conditions_levin1d}, which simplify to
\begin{align}\label{eqn:remaining_sxs_linear_system_levin1d}
	\sum_{j=1}^{2s} \alpha_{\nu+1+j}\left(\frac{\dd^l}{\dd x^l}\mathcal{L}_\omega\ChebyshevT_{\nu+1+j}(\pm1)-\sum_{n=0}^{\nu+1} \beta^{{[j]}}_n\frac{\dd^l}{\dd x^l}\mathcal{L}_\omega\ChebyshevT_n(\pm1)\right)=f^{{[j]}}(\pm 1)-\sum_{n=0}^{\nu+1} \beta_n\frac{\dd^l}{\dd x^l}\mathcal{L}_\omega\ChebyshevT_n(\pm1),
\end{align}
for $l=1,\dots, s$. Note {that}, using \eqref{eqn:explicit_expressions_chebyshev_polys_at_pm1}, the computation of the coefficients in this linear system takes $\mathcal{O}(s^2)$ additional operations. The small linear system \eqref{eqn:remaining_sxs_linear_system_levin1d} is, in general, dense and hence takes $\mathcal{O}(s^3)$ operations to solve.
\subsubsection*{An algorithm for the efficient construction of the Levin--Clenshaw--Curtis method ($\mathbf{M=1,s\geq1}$)}
Thus we can summarise the extension of our algorithm for the efficient computation of the Levin method in the case $s\geq1$ as follows:
\begin{enumerate}
	\item Using the algorithm described in \S\ref{sec:initial_case_m-1_s-1} find $\bm{\beta},\bm{\beta}^{{[j]}}\in\mathbb{C}^{\nu+2}$ with
	\begin{align*}
		\mathsf{A}\bm{\beta}&=\mathbf{f},\quad		\mathsf{A}\bm{\beta}^{{[j]}}=-\mathbf{h}^{{[j]}},
	\end{align*}
where $h_m^{{[j]}}=\mathcal{L}_\omega \ChebyshevT_{\nu+1+j}(c_m)$ and $\mathsf{A}_{mn}=\mathcal{L}_\omega\ChebyshevT_n(c_m)$.
\item Express the first $\nu+2$ coordinates of $\bm{\alpha}\in\mathbb{C}^{\nu+2s+2}$ in terms of $\alpha_{\nu+2},\dots,\alpha_{\nu+2s+1}$,
\begin{align*}
	\mathsf{P}_{\nu+2}\bm{\alpha}=\bm{\beta}+\sum_{j=1}^{2s}\alpha_{\nu+1+j}\bm{\beta}^{{[j]}}.
\end{align*}
\item Solve the remaining $2s\times 2s$ auxiliary system \eqref{eqn:remaining_sxs_linear_system_levin1d} for $\alpha_{\nu+2},\dots,\alpha_{\nu+2s+1}$.
\item This solution then uniquely determines $\alpha_{0},\dots,\alpha_{\nu+2s+1}$ and we can compute the Levin approximation to $I_\omega[f]$ by
\begin{align*}
	\mathcal{Q}_{\omega}^{L,[\nu,s]}[f]=\sum_{n=0}^{\nu+2s+1}\alpha_n e^{i\omega g(1)}-\sum_{n=0}^{\nu+2s+1}(-1)^n\alpha_n e^{i\omega g(-1)}.
\end{align*}
\end{enumerate}
This takes overall $\mathcal{O}(s(\nu\log\nu+d^2\nu)+s^3)$ operations (assuming $s\geq 1,\nu\geq 2$).
\subsection{Vectorial oscillatory weight functions, $\mathbf{M\geq2,s=0}$}\label{sec:vectorial_weight_functions}
In this case we choose $M$ approximation spaces, each consisting of a Chebyshev polynomial basis, such that
\begin{align*}
	q_j(x)=\sum_{n=0}^{\nu+1}\alpha_{n}^{{[j]}}\ChebyshevT_{n}(x),\quad j=1,\dots,M.
\end{align*}
Our goal is then to solve the system of collocation equations \eqref{eqn:general_levin_collocation_equations} with $s=0$. Since the entries of $\mathsf{G}_\omega$ are rational functions in $x$ and without poles in $[-1,1]$ the Levin differential equation \eqref{eqn:general_Levin_differential_equation} is equivalent to
\begin{align*}
	r(x)\mathbf{u}'(x)+r(x)\mathsf{G}_\omega^\top(x)\mathbf{u}(x)=r(x)\mathbf{f}(x),
\end{align*}
where $r(x)$ is the least common multiple of all denominators in $\mathsf{G}_\omega$ such that each entry of $r(x)\mathsf{G}_\omega$ is a polynomial in $x$. Thus the collocation solution of \eqref{eqn:general_Levin_differential_equation} is equivalent to the following linear system
\begin{align}\label{eqn:collocation_equations_M>1}
	\mathsf{A}\bm{\alpha}=\mathbf{F},
\end{align}
where
\begin{align}\label{eqn:def_of_block_A}
	\mathsf{A}=\begin{pmatrix}
		\mathsf{A}^{{[1,1]}}&\cdots &\mathsf{A}^{{[1,M]}}\\
		\vdots&\ddots&\vdots\\
		\mathsf{A}^{{[M,1]}}&\cdots&\mathsf{A}^{{[M,M]}}
	\end{pmatrix},\quad \bm{\alpha}=\begin{pmatrix}
	\bm{\alpha}^{{[1]}}\\
	\vdots\\
	\bm{\alpha}^{{[M]}}
\end{pmatrix},\quad\mathbf{F}=\begin{pmatrix}
\mathbf{F}^{{[1]}}\\
\vdots\\
\mathbf{F}^{{[M]}}
\end{pmatrix},
\end{align}
and more specifically each block in this linear system is defined as
\begin{align*}
	\mathsf{A}^{{[i,j]}}_{mn}=\left[r\frac{\dd\ChebyshevT_n}{\dd x}+(r\mathsf{G}_\omega)_{ij}\ChebyshevT_n\right]_{x=c_m},\quad \mathbf{F}^{{[i]}}_m=r(c_m)f_i(c_m),\quad 1\leq i,j\leq M,0\leq m,n\leq \nu+1.
\end{align*}
Note {that} for consistency with the previous sections we shall index vectors in $\mathbb{C}^{M(\nu+2)}$ by $i\in\{0,1,\dots,\nu+1,\nu+2,\dots, (M-1)(\nu+2)+\nu+1\}$. Let $d_1=\deg r(x)$ denote the polynomial degree of $r(x)$, $d_2=\max_{1\leq i,j\leq M}\left(r(x)\mathsf{G}_\omega(x)\right)_{ij}{-1}$ be the polynomial degree of any entry of the matrix, and define $d=\max\{d_1,d_2\}$: this will determine the bandwidth of the banded matrix representation of the action of the modified Levin differential operator on our basis functions in this case. Let us consider the following differential operator
\begin{align*}
	\widetilde{\bm{\mathcal{L}}}_\omega:=(1-x^2)r(x)\frac{\dd}{\dd x}+(1-x^2)r(x)\mathsf{G}_\omega^\top.
\end{align*}
Then the action of $\widetilde{\bm{\mathcal{L}}}_\omega$ on a general vector of Chebyshev polynomial basis functions $(\ChebyshevT_{n_1},\dots,\ChebyshevT_{n_{M}})^\top$ is of the form
\begin{align*}
		\widetilde{\bm{\mathcal{L}}}_\omega(\ChebyshevT_{n_1},\dots,\ChebyshevT_{n_{M}})^\top(x)=\left(\sum_{j=1}^M\sum_{k=\max\{0,n_j-(d+1)\}}^{n_j+d+1}\mathsf{B}^{{[1,j]}}_{kn_j}\ChebyshevT_k(x),\dots,\sum_{j=1}^M\sum_{k=\max\{0,n_j-(d+1)\}}^{n_j+d+1}\mathsf{B}^{{[M,j]}}_{kn_j}\ChebyshevT_k(x)\right),
\end{align*}
Let us define in the spirit of \eqref{eqn:def_of_tilde{B}_1d_case}
\begin{align*}
			\widetilde{\mathsf{B}}^{{[i,j]}}_{nm}=\begin{cases}
		\mathsf{B}^{{[i,j]}}_{nm},&0\leq n<\nu-d, n=\nu+1,\\
		\mathsf{B}^{{[i,j]}}_{nm}+\mathsf{B}^{{[i,j]}}_{(2\nu+2-n)m},&\nu-d\leq n\leq \nu.
	\end{cases}
\end{align*}
By using $\ChebyshevT_{\nu+1+l}(c_m)=\ChebyshevT_{\nu+1-l}(c_m),\,0\leq l, m\leq \nu+1,$ (cf. \eqref{eqn:T_nu+1+l_equals_T_nu+1-l}) we can thus simplify the collocation equations for the modified operator $	\widetilde{\bm{\mathcal{L}}}_\omega$ by the $M(\nu+2)\times M(\nu+2)$-matrix $\widetilde{\mathsf{A}}$ which is given by
\begin{align*}
	\widetilde{\mathsf{A}}=\underbrace{\begin{pmatrix}
		\mathsf{C}&\mathsf{0}&\dots&\mathsf{0}\\
		\mathsf{0}&\mathsf{C}&\ddots&\vdots\\
		\vdots&\ddots&\ddots&\mathsf{0}\\
		\mathsf{0}&\cdots&\mathsf{0}&\mathsf{C}
	\end{pmatrix}}_{=:\mathsf{C}_M}\widetilde{\mathsf{B}},\quad\widetilde{\mathsf{B}}=\begin{pmatrix}
		\widetilde{\mathsf{B}}^{{[1,1]}}&\cdots &\widetilde{\mathsf{B}}^{{[1,M]}}\\
		\vdots&\ddots&\vdots\\
		\widetilde{\mathsf{B}}^{{[M,1]}}&\cdots&\widetilde{\mathsf{B}}^{{[M,M]}}
	\end{pmatrix},
\end{align*}
where, as we had in \S\ref{sec:initial_case_m-1_s-1}, $\mathsf{C}_{mk}=\ChebyshevT_k(c_m)$. We now define $\mathsf{P}_{M,\nu},$ the block-wise projection onto the middle $\nu$ coordinates, $\mathsf{P}_{M,\nu}:\mathbb{C}^{M(\nu+2)}\rightarrow \mathbb{C}^{M(\nu+2)},$ by
\begin{align*}
\mathsf{P}_{M,\nu}\begin{pmatrix}
	\mathbf{x}^{{[1]}}\\
	\vdots\\
	\mathbf{x}^{{[M]}}
\end{pmatrix}=\begin{pmatrix}
\mathsf{P}_{\nu}\mathbf{x}^{{[1]}}\\
\vdots\\
\mathsf{P}_{\nu}\mathbf{x}^{{[M]}}
\end{pmatrix},
\end{align*}
where $\mathsf{P}_\nu$ is defined in \S\ref{sec:initial_case_m-1_s-1}. Note that we can, by appropriately embedding $\mathbb{C}^{M\nu}$ into $\mathbb{C}^{M(\nu+2)}$, alternatively think of this projection as a surjective map onto $\mathbb{C}^{M\nu}$. Analogously to our treatment in the case when $M=1$ we shall consider the linear system
\begin{align*}
	\mathsf{P}_{M,\nu}\mathsf{A}\mathsf{P}_{M,\nu}\bm{\alpha}_0=\mathsf{P}_{M,\nu}\mathbf{F},
\end{align*}
which is equivalent to 
\begin{align}\label{eqn:projected_system_case_M>1}
	\mathsf{P}_{M,\nu}\widetilde{\mathsf{A}}\mathsf{P}_{M,\nu}\bm{\alpha}_0=\mathsf{P}_{M,\nu}\tilde{\mathbf{F}},
\end{align}
where analogously to the case $M=1$ the vector $\tilde{\mathbf{F}}$ is given by
\begin{align*}
	\tilde{\mathbf{F}}=\begin{pmatrix}
		\tilde{\mathbf{F}}^{{[1]}}\\
		\vdots\\
		\tilde{\mathbf{F}}^{{[M]}}
	\end{pmatrix},\quad \tilde{\mathbf{F}}^{{[i]}}_m=(1-c_m^2){\mathbf{F}}^{{[i]}}_m,\quad 1\leq i\leq M,0\leq m\leq \nu+1.
\end{align*}

Before attempting to solve \eqref{eqn:projected_system_case_M>1} let us prove that, for $\omega$ sufficiently large, this $(M\nu)\times(M\nu)$-system has a unique solution:
\begin{proposition}\label{prop:well-posedness_of_smaller_system_M>1}
	For $\omega$ sufficiently large, the system \eqref{eqn:projected_system_case_M>1} has a unique solution $\bm{\alpha}_0\in\mathbb{C}^{M(\nu+2)}$ with $\bm{\alpha}_0=\mathsf{P}_{M,\nu}\bm{\alpha}_0$.
\end{proposition}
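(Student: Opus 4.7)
The plan is to parallel the argument used in the proof of Proposition~\ref{prop:efficient_inversion_of_subsystem} for the scalar case, reducing everything to a block-diagonal version of the interior-point DCT problem plus a small perturbation. First I would cancel the common per-row scaling $(1-c_m^2)r(c_m)$, which is nonzero at the interior nodes $c_1,\dots,c_\nu$, so that \eqref{eqn:projected_system_case_M>1} becomes equivalent to the original Levin collocation conditions imposed only at interior points,
\begin{equation*}
\mathbf{q}'(c_m) + \mathsf{G}_\omega^\top(c_m)\,\mathbf{q}(c_m) = \mathbf{f}(c_m), \qquad m=1,\dots,\nu,
\end{equation*}
for the vector polynomial $\mathbf{q}(x)=(q_1,\dots,q_M)^\top$ with $q_j(x)=\sum_{n=1}^{\nu}(\bm{\alpha}_0^{[j]})_n\ChebyshevT_n(x)$, subject to the constraint $\bm{\alpha}_0 = \mathsf{P}_{M,\nu}\bm{\alpha}_0$.

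Under the standing Levin assumption that $\mathsf{G}_\omega$ is invertible with $\mathsf{G}_\omega^{-1}=\mathcal{O}(\hat{\mathsf{G}})$ and $\hat{\mathsf{G}}=o(1)$ (cf.\ Theorem~\ref{thm:matrix_asymptotics}), for $\omega$ large enough $\mathsf{G}_\omega^\top(c_m)$ is invertible uniformly in $m$ and its inverse is $o(1)$. Multiplying each block of equations on the left by $(\mathsf{G}_\omega^\top(c_m))^{-1}$ recasts the system as
\begin{equation*}
\bigl(\mathsf{I}_{M\nu} + \mathsf{R}_\omega\bigr)\mathsf{P}_{M,\nu}\bm{\alpha}_0 = \mathbf{g}_\omega,
\end{equation*}
where the leading operator $\mathsf{I}_{M\nu}$ acts block-diagonally in the component index with each of its $M$ diagonal blocks being precisely the interior-point DCT operator $\mathsf{P}_\nu\mathsf{C}\mathsf{P}_\nu$ from Section~\ref{sec:initial_case_m-1_s-1}, while $\mathsf{R}_\omega$ encodes the composition of the Chebyshev differentiation matrix with $(\mathsf{G}_\omega^\top(c_m))^{-1}$.

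The remainder divides into two independent tasks. First, invertibility of the unperturbed operator on $\{\bm{\alpha}_0 = \mathsf{P}_{M,\nu}\bm{\alpha}_0\}$ follows by applying Claim~\ref{claim6:special_properties_of_DCTI_in_this_space} to each of the $M$ diagonal blocks separately, exactly as already established in the proof of Proposition~\ref{prop:efficient_inversion_of_subsystem}. Second, since the Chebyshev differentiation matrix has entries independent of $\omega$ (depending only polynomially on $\nu$) while $\|(\mathsf{G}_\omega^\top(c_m))^{-1}\| = o(1)$ uniformly, the operator norm of $\mathsf{R}_\omega$ tends to zero as $\omega\to\infty$. A Neumann series argument then yields invertibility of $\mathsf{I}_{M\nu}+\mathsf{R}_\omega$ for sufficiently large $\omega$, and consequently existence and uniqueness of $\bm{\alpha}_0$ in the prescribed subspace.

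The main obstacle to watch is the bookkeeping in the reformulation: one has to verify that the projection $\mathsf{P}_{M,\nu}$ commutes cleanly with the block-wise left multiplication by $(\mathsf{G}_\omega^\top(c_m))^{-1}$ so that the leading operator really is block-diagonal over the $M$ components; multiplication by $\mathsf{G}^{-1}$ mixes components within each node $m$, but does not touch the indexing in $n$ that $\mathsf{P}_{M,\nu}$ restricts. Once this is clear, the reduction to $M$ independent copies of the scalar problem treated by Claim~\ref{claim6:special_properties_of_DCTI_in_this_space} is immediate, and the perturbative Neumann argument completes the proof.
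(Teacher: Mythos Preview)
Your proposal is correct and follows essentially the same approach as the paper: both reduce the projected system to a block-diagonal interior DCT operator (handled by applying Claim~\ref{claim6:special_properties_of_DCTI_in_this_space} to each of the $M$ blocks) plus an $o(1)$ perturbation coming from $(\mathsf{G}_\omega^\top)^{-1}$ times the differentiation part, and then conclude by a perturbative argument. The only cosmetic difference is that the paper invokes Lemma~\ref{lem:invertability_of_sum_small_large_matrix} (Olver's result) for the perturbation step, whereas you spell out the underlying Neumann series directly; these are of course the same thing.
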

Our proof will use the following result from \cite{Olver2007}:
\begin{lemma}[{Thm.~2.1 in \cite{Olver2007}}]\label{lem:invertability_of_sum_small_large_matrix}
	Suppose that $\mathsf{A} = \mathsf{X} + \mathsf{G}$ is a square matrix. If $\mathsf{X} = o(1)$ and $\mathsf{G}$ is invertible with $\mathsf{G}^{-1} = \mathcal{O}(1)$ (both as $\omega\rightarrow\infty$), then $\mathsf{A}$ is nonsingular when $\omega$ is sufficiently large and $\mathsf{A}^{-1}=\mathcal{O}(1)$ as $\omega\rightarrow\infty$.
\end{lemma}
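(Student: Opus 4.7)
The plan is to prove Lemma~\ref{lem:invertability_of_sum_small_large_matrix} by a standard Neumann series argument, factoring out the invertible dominant term $\mathsf{G}$ and treating $\mathsf{X}$ as a vanishing perturbation. Fix any submultiplicative matrix norm $\|\cdot\|$ (the choice is immaterial since all norms on $\mathbb{C}^{N\times N}$ are equivalent, so the meaning of $\mathcal{O}(1)$ and $o(1)$ is unchanged). The first step is the algebraic identity
\begin{align*}
\mathsf{A}=\mathsf{G}+\mathsf{X}=\mathsf{G}\bigl(\mathsf{I}+\mathsf{G}^{-1}\mathsf{X}\bigr),
\end{align*}
which reduces the invertibility of $\mathsf{A}$ to that of $\mathsf{I}+\mathsf{G}^{-1}\mathsf{X}$.

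Second, I would estimate $\mathsf{G}^{-1}\mathsf{X}$. By submultiplicativity,
\begin{align*}
\|\mathsf{G}^{-1}\mathsf{X}\|\leq\|\mathsf{G}^{-1}\|\,\|\mathsf{X}\|,
\end{align*}
and since $\|\mathsf{G}^{-1}\|=\mathcal{O}(1)$ while $\|\mathsf{X}\|=o(1)$ as $\omega\to\infty$, this product is $o(1)$. Hence there exists $\omega_0$ such that $\|\mathsf{G}^{-1}\mathsf{X}\|\leq 1/2$ for all $\omega\geq\omega_0$.

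Third, for $\omega\geq\omega_0$ the Neumann series
\begin{align*}
\bigl(\mathsf{I}+\mathsf{G}^{-1}\mathsf{X}\bigr)^{-1}=\sum_{k=0}^\infty (-\mathsf{G}^{-1}\mathsf{X})^k
\end{align*}
converges, showing that $\mathsf{I}+\mathsf{G}^{-1}\mathsf{X}$ is invertible with
\begin{align*}
\bigl\|(\mathsf{I}+\mathsf{G}^{-1}\mathsf{X})^{-1}\bigr\|\leq\sum_{k=0}^\infty\|\mathsf{G}^{-1}\mathsf{X}\|^k\leq 2.
\end{align*}
Consequently $\mathsf{A}$ is nonsingular for $\omega\geq\omega_0$, with
\begin{align*}
\mathsf{A}^{-1}=\bigl(\mathsf{I}+\mathsf{G}^{-1}\mathsf{X}\bigr)^{-1}\mathsf{G}^{-1},\qquad \|\mathsf{A}^{-1}\|\leq 2\,\|\mathsf{G}^{-1}\|=\mathcal{O}(1)\ \text{as}\ \omega\to\infty,
\end{align*}
which is the desired conclusion.

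There is essentially no hard part: the argument is the textbook perturbation-of-invertibility result, and the only thing to be careful about is that the $\mathcal{O}$ and $o$ notation in the hypotheses and conclusion be read in a norm-consistent way. I would add a brief remark at the start of the proof pinning down the norm convention so that the reader does not worry about whether the assertions are entry-wise or norm-wise; after that, the factorisation, the submultiplicative estimate, and the geometric series bound complete the argument in a few lines.
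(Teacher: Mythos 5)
Your proof is correct: the factorisation $\mathsf{A}=\mathsf{G}(\mathsf{I}+\mathsf{G}^{-1}\mathsf{X})$, the submultiplicative bound $\|\mathsf{G}^{-1}\mathsf{X}\|=o(1)$, and the Neumann series give exactly the claimed nonsingularity and the uniform bound $\mathsf{A}^{-1}=\mathcal{O}(1)$, and your remark that norm-wise and entry-wise $\mathcal{O}$/$o$ coincide for fixed-dimension matrices disposes of the only possible ambiguity. Note, however, that the paper does not prove this lemma at all --- it is imported verbatim as Theorem~2.1 of Olver's 2007 paper --- so there is no in-paper argument to compare against; your write-up is the standard perturbation-of-invertibility proof one would expect to find in that reference.
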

\begin{proof}[Proof of Prop.~\ref{prop:well-posedness_of_smaller_system_M>1}]
We extend the proof of Prop.~\ref{prop:efficient_inversion_of_subsystem} to the case $M>1$: Firstly, we can write the system \eqref{eqn:projected_system_case_M>1} in the form
\begin{align*}
	\begin{pmatrix}\sum_{n=1}^{\nu}\alpha_{n}^{{[1]}}\ChebyshevT_{n}(c_m)\\
		\vdots\\
		\sum_{n=1}^{\nu}\alpha_{n}^{{[M]}}\ChebyshevT_{n}(c_m)
	\end{pmatrix}=\left[\left(\mathsf{G}_\omega^\top\right)^{-1}\mathbf{f}\right]_{x=c_m}+{o(1)},\quad m=1,\dots,\nu, \text{as}\ \omega\rightarrow\infty,
\end{align*}
i.e. $\mathsf{P}_{M,\nu}\mathsf{C}_M\mathsf{P}_{M,\nu}=\mathsf{P}_{M,\nu}\mathbf{g}$, some $\mathbf{g}\in\mathbb{C}^{M\nu}$. Thus it remains to show that the equation
\begin{align*}
	\mathsf{P}_{M,\nu}\mathsf{C}_M\mathbf{y}=\mathsf{P}_{M,\nu}\mathbf{z}
\end{align*}
has a unique solution $\mathbf{y}\in\mathbb{C}^{M(\nu+2)}$ with $\mathbf{y}=\mathsf{P}_{M,\nu}\mathbf{y}$ for any $\mathbf{z}\in\mathbb{C}^{M(\nu+2)}$. Indeed this just the block diagonal analogue of our proof of Prop.~\ref{prop:efficient_inversion_of_subsystem} and so the result follows by applying the same arguments block-wise and using Lemma~\ref{lem:invertability_of_sum_small_large_matrix}.
\end{proof}

Having proved well-posedness we can now think of the system \eqref{eqn:projected_system_case_M>1} as a square $(M\nu)\times (M\nu)$ linear system on $\mathbb{C}^{M\nu}$. This system can be solved efficiently in a spirit similar to the scalar case $M=1$:

\begin{proposition}\label{prop:fast_solution_linear_system_M>1}
	{Once} a solution exists, we can solve \eqref{eqn:projected_system_case_M>1} using $\mathcal{O}(M\nu\log\nu+M^3d^2\nu)$ operations, by a simple reordering followed by the block-wise application of discrete cosine transforms (DCT-I) and the solution of a banded matrix system of bandwidth at most $2M(d+4)-1$.
\end{proposition}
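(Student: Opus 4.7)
The plan is to adapt the scalar strategy of Prop.~\ref{prop:efficient_inversion_of_subsystem} to the block setting. Since $\mathsf{C}_M$ is block-diagonal with $M$ identical DCT-I blocks, the factorisation $\widetilde{\mathsf{A}} = \mathsf{C}_M\widetilde{\mathsf{B}}$ together with a block-wise version of Claim~\ref{claim6:special_properties_of_DCTI_in_this_space} should allow us to rewrite \eqref{eqn:projected_system_case_M>1} as the equivalent system
\begin{equation*}
\mathsf{P}_{M,\nu}\widetilde{\mathsf{B}}\mathsf{P}_{M,\nu}\bm{\alpha}_0 = \mathsf{P}_{M,\nu}\mathsf{C}_M^{-1}\tilde{\mathbf{F}}.
\end{equation*}
The right-hand side is computed by applying one inverse DCT-I to each of the $M$ sub-vectors of length $\nu+2$, at a total cost of $\mathcal{O}(M\nu\log\nu)$.

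To solve the resulting system I would exploit the block-banded structure of $\widetilde{\mathsf{B}}$ by an interleaving reordering of the unknowns, $(i,n) \mapsto M(n-1)+i$, which groups coefficients by Chebyshev degree rather than by vector component. Each of the $M\times M$ blocks $\widetilde{\mathsf{B}}^{[i,j]}$ inherits bandwidth at most $2d+3$ from Lemma~\ref{lem:banded_operators_Chebyshev_polynomials}, and the boundary folding used to define $\widetilde{\mathsf{B}}^{[i,j]}$ leaves this bandwidth essentially unchanged because the reflected entries are confined to columns close to $\nu+1$. Consequently, an entry at block-indices $(i,n),(j,m)$ with $|n-m|\leq d+1$ maps under the reordering to a linear index separation of at most $M(d+1)+(M-1)$, producing a genuinely banded matrix of half-bandwidth comfortably within $M(d+4)-1$.

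The total cost then follows by summing three contributions: $\mathcal{O}(M\nu\log\nu)$ for the $M$ DCT-I applications; $\mathcal{O}(M\nu)$ for the trivial index permutation (and its inverse applied to the solution); and $\mathcal{O}(M\nu\cdot (Md)^2) = \mathcal{O}(M^3 d^2 \nu)$ for banded Gaussian elimination on a matrix of size $M\nu$ and half-bandwidth $\mathcal{O}(Md)$. Combined these give the advertised complexity $\mathcal{O}(M\nu\log\nu + M^3 d^2 \nu)$.

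The main obstacle will be the careful generalisation of Claim~\ref{claim6:special_properties_of_DCTI_in_this_space} to the block setting, so as to justify the equivalence between the original projected system and the post-DCT banded one without losing well-posedness. Since $\mathsf{C}_M$ and $\mathsf{P}_{M,\nu}$ both act block-wise this should reduce to $M$ independent copies of the scalar argument, but the constraint $\bm{\alpha}_0 = \mathsf{P}_{M,\nu}\bm{\alpha}_0$ must be tracked consistently across all blocks. A secondary care-item is verifying that the corner modifications in the definition of $\widetilde{\mathsf{B}}^{[i,j]}$ do not push any entries beyond the stated bandwidth after reordering; the bookkeeping is elementary but needs to be carried out blockwise.
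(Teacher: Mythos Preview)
Your proposal is correct and follows essentially the same route as the paper: the block-wise extension of Claim~\ref{claim6:special_properties_of_DCTI_in_this_space} reduces \eqref{eqn:projected_system_case_M>1} to $\mathsf{P}_{M,\nu}\widetilde{\mathsf{B}}\mathsf{P}_{M,\nu}\bm{\alpha}_0=\mathsf{P}_{M,\nu}\mathsf{C}_M^{-1}\tilde{\mathbf{F}}$, and the interleaving permutation you describe is precisely the reordering the paper isolates as Lemma~\ref{lem:reordering_to_get_banded_system} to obtain a banded matrix of half-bandwidth at most $M(d+4)$. The only cosmetic difference is that the paper states the permutation explicitly as $p(Ml+k)=(k-1)\nu+l+1$ and verifies the bandwidth via the inequality $|l_1-l_2|\geq (|i-j|-|k_1-k_2|)/M$, whereas you sketch the same bookkeeping informally.
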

The efficient solution of \eqref{eqn:projected_system_case_M>1} can be performed by blockwise operations similar to the process introduced in the proof of Prop.~\ref{prop:efficient_inversion_of_subsystem}, with the following additional observation on the fast inversion of $\widetilde{\widetilde{\mathsf{B}}}:=\mathsf{P}_{M,\nu}\widetilde{\mathsf{B}}\mathsf{P}_{M,\nu}$. Note {that} each $(\nu+2)\times(\nu+2)$-block of the matrix $\widetilde{\widetilde{\mathsf{B}}}$ is a banded matrix of bandwidth at most $2d+1$. We can thus reorder rows and columns in such a way that inverting $\widetilde{\widetilde{\mathsf{B}}}$ is equivalent to the solution of a banded linear system with bandwidth $M(2d+5)$. A similar idea of reordering appears in the Hockney method \cite{hockney1965fast} (cf. also \cite{iserles2009first}) and was also kindly mentioned to the authors in personal communication with Webb \cite{webb2021}.

\begin{lemma}\label{lem:reordering_to_get_banded_system}
	The solution of $\widetilde{\widetilde{\mathsf{B}}}\mathbf{x}_1=\mathbf{y}_1$ is equivalent to the solution of $\mathsf{D}\mathbf{x}_2=\mathbf{y}_2$ where $\mathsf{D}$ is a banded matrix of bandwidth at most $2M(d+4)-1$.
\end{lemma}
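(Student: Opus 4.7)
My plan is to apply a Hockney-style symmetric permutation $\Pi$ that interleaves the $M$ blocks of $\widetilde{\widetilde{\mathsf{B}}}$ so that the $M$ coefficients attached to the same Chebyshev index become consecutive. Indexing the entries of $\mathbf{x}_1\in\mathbb{C}^{M(\nu+2)}$ by pairs $(i,n)$, $1\le i\le M$, $0\le n\le \nu+1$, with $(\mathbf{x}_1)_{(i-1)(\nu+2)+n}$ denoting the $n$-th Chebyshev coefficient of block $i$, I would define $\Pi$ by
\[
(\Pi\mathbf{x}_1)_{nM+i-1}:=(\mathbf{x}_1)_{(i-1)(\nu+2)+n},
\]
and set $\mathsf{D}:=\Pi\widetilde{\widetilde{\mathsf{B}}}\Pi^{-1}$, $\mathbf{x}_2:=\Pi\mathbf{x}_1$, $\mathbf{y}_2:=\Pi\mathbf{y}_1$. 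Since $\Pi$ is invertible, the equivalence $\widetilde{\widetilde{\mathsf{B}}}\mathbf{x}_1=\mathbf{y}_1\iff\mathsf{D}\mathbf{x}_2=\mathbf{y}_2$ is immediate.

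Next I would verify that $\mathsf{D}$ is banded. The entry of $\mathsf{D}$ at position $(nM+i-1,\,mM+j-1)$ equals $\widetilde{\widetilde{\mathsf{B}}}^{[i,j]}_{nm}$, so I need a bound on the half-bandwidth of each block. Applying Lemma~\ref{lem:banded_operators_Chebyshev_polynomials} to $\widetilde{\bm{\mathcal{L}}}_\omega=(1-x^2)r(x)\dd/\dd x+(1-x^2)r(x)\mathsf{G}_\omega^\top$ — noting that $(1-x^2)r(x)$ is a polynomial of degree $d_1+2$, the identity $(1-x^2)\ChebyshevT_n'(x)=\tfrac{n}{2}\ChebyshevT_{n-1}(x)-\tfrac{n}{2}\ChebyshevT_{n+1}(x)$ contributes only a further shift of one, and each entry of $(1-x^2)r(x)\mathsf{G}_\omega^\top$ is a polynomial of degree at most $d_2+3$ — shows that each block has half-bandwidth at most $d+3$. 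Consequently any non-zero entry of $\mathsf{D}$ satisfies $|n-m|\le d+3$, and then
\[
|(nM+i-1)-(mM+j-1)|=|(n-m)M+(i-j)|\le (d+3)M+(M-1)=M(d+4)-1,
\]
so $\mathsf{D}$ has half-bandwidth at most $M(d+4)-1$, i.e.\ total bandwidth at most $2M(d+4)-1$.

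The main obstacle will be rigorously pinning down the block half-bandwidth $d+3$: this requires carefully combining the multiple shifts from Lemma~\ref{lem:banded_operators_Chebyshev_polynomials} applied to each term of the vectorial operator $\widetilde{\bm{\mathcal{L}}}_\omega$, together with verifying that the boundary folding $n\mapsto 2\nu+2-n$ used in the matrix analogue of the definition of $\widetilde{\mathsf{B}}$ does not push any non-zero entries outside the band. Once these bookkeeping steps are done, the stated bandwidth bound for $\mathsf{D}$ follows directly from the displacement estimate above.
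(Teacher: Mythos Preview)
Your proposal is correct and follows essentially the same route as the paper: both apply the Hockney-style interleaving permutation and then bound the bandwidth via the displacement estimate $|(n-m)M+(i-j)|\le (d+3)M+(M-1)$, using that each block $\widetilde{\widetilde{\mathsf{B}}}^{[i,j]}$ has half-bandwidth at most $d+3$. One minor bookkeeping slip: since $\widetilde{\widetilde{\mathsf{B}}}=\mathsf{P}_{M,\nu}\widetilde{\mathsf{B}}\mathsf{P}_{M,\nu}$ is viewed as acting on $\mathbb{C}^{M\nu}$ (the paper indexes $1\le i,j\le M\nu$), your Chebyshev index $n$ should run over $\nu$ values rather than $\nu+2$, but this does not affect the argument or the final bound.
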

\begin{proof}In this proof it is beneficial to index the entries of the matrix as $\widetilde{\widetilde{\mathsf{B}}}_{i,j}$ with indices taking values $1\leq i,j\leq M\nu$. Let us define the permutation $p:\{1,2,\dots,M\nu\}\rightarrow\{1,2,\dots,M\nu\}$ as follows: For a given $n\in\{1,2,\dots,M\nu\}$, let $l\in\{0,\dots, \nu-1\}$ be maximal such that $n=Ml+k$ for some $k\in\{1,\dots,M\}$ and define
	\begin{align*}
		p(n)=(k-1)\nu+l+1.
	\end{align*}
Then we construct $\mathsf{D},\mathbf{x}_2,\mathbf{y}_2$ as follows:
	\begin{align*}
		\mathsf{D}_{i,j}&=\widetilde{\widetilde{\mathsf{B}}}_{p(i),p(j)},\,
		x^{{[2]}}_j=x^{{[1]}}_{p(j)},\,
		y^{{[2]}}_j=y^{{[1]}}_{p(j)}.
	\end{align*}
Clearly the system $\widetilde{\widetilde{\mathsf{B}}}\mathbf{x}_1=\mathbf{y}_1$ is equivalent to ${\mathsf{D}}\mathbf{x}_2=\mathbf{y}_2$. Moreover $\mathsf{D}$ has the desired banded structure due to the following observation: Let $i,j\in \{1,2,\dots,M\nu\}$ and suppose that $|i-j|>(d+4)M$. Let us write $i,j$ uniquely in the form
\begin{align*}
	i&=l_1M+k_1\\
	j&=l_2M+k_2
\end{align*}
as introduced above. Then we have $D_{i,j}=\widetilde{\widetilde{\mathsf{B}}}_{p(i),p(j)}=\left(\mathsf{P}_{\nu}\widetilde{\mathsf{B}}^{{[k_1,k_2]}}\mathsf{P}_{\nu}\right)_{l_1+1,l_2+1}$. Now $|l_1-l_2|\geq \frac{1}{M}(|i-j|-|k_1-k_2|)> \frac{(d+4)M-M}{M}=d+3$ which implies that indeed $\left(\mathsf{P}_{\nu}\widetilde{\mathsf{B}}^{{[k_1,k_2]}}\mathsf{P}_{\nu}\right)_{l_1+1,l_2+1}=0$.
\end{proof}

\begin{example}
	To better visualise this process of reordering consider the following simple example where $M=2,\nu=3$ and each block $\widetilde{\mathsf{B}}^{{[n,m]}}$ is diagonal.
	\begin{align*}
	\tilde{B}=	\begin{pmatrix}
			b^{{[1,1]}}_1&0&0&b^{{[1,2]}}_1&0&0\\
			0&b^{{[1,1]}}_2&0&0&b^{{[1,2]}}_2&0\\
			0&0&b^{{[1,1]}}_3&0&0&b^{{[1,2]}}_3\\
			b^{{[2,1]}}_1&0&0&b^{{[2,2]}}_1&0&0\\
			0&b^{{[2,1]}}_2&0&0&b^{{[2,2]}}_2&0\\
			0&0&b^{{[2,1]}}_3&0&0&b^{{[2,2]}}_3\\
		\end{pmatrix}
	\end{align*}
Under the reordering $p$ as introduced above this matrix becomes
\begin{align*}
\mathsf{D}=\begin{pmatrix}
		b^{{[1,1]}}_1&b^{{[1,2]}}_1&0&0&0&0\\
		b^{{[2,1]}}_1&b^{{[2,2]}}_1&0&0&0&0\\
		0&0&b^{{[1,1]}}_2&b^{{[1,2]}}_2&0&0\\
		0&0&b^{{[2,1]}}_2&b^{{[2,2]}}_2&0&0\\
		0&0&0&0&b^{{[1,1]}}_3&b^{{[1,2]}}_3\\
		0&0&0&0&b^{{[2,1]}}_3&b^{{[2,2]}}_3\\
	\end{pmatrix}
\end{align*}
which is tridiagonal.
\end{example}

\begin{proof}[Proof of Prop.~\ref{prop:fast_solution_linear_system_M>1}]
	By block-wise extension of Claim~\ref{claim6:special_properties_of_DCTI_in_this_space} we have that \eqref{eqn:projected_system_case_M>1} is equivalent to the system
	\begin{align*}
	\mathsf{P}_{M,\nu}\widetilde{\mathsf{B}}\mathsf{P}_{M,\nu}\bm{\alpha}_{0}=\mathsf{P}_{M,\nu}	\mathsf{C}_{M}^{-1}\tilde{\mathbf{F}}
	\end{align*}
We note that $\mathsf{C}_{M}^{-1}$ can be written as $M$ blockwise DCT-I transforms, thus $\mathsf{C}_{M}^{-1}\tilde{\mathbf{F}}$ can be found in $\mathcal{O}(M \nu\log\nu)$ operations. By Lemma~\ref{lem:reordering_to_get_banded_system} this remaining system is, after suitable reordering, diagonal of bandwidth at most $2M(d+4)-1$ and size $M\nu$, thus it can be solved in $\mathcal{O}(M^3d^2\nu)$ operations.
\end{proof}
We have thus found $\bm{\alpha}_0\in \{\mathbf{x}\in\mathbb{C}^{M(\nu+2)}\,\big|\,x_{(k-1)(\nu+2)+l}=0, l=0,\nu+1,k=1,\dots, M\}$ with 
\begin{align*}
	\mathsf{P}_{M,\nu}\mathsf{A}\bm{\alpha}_0=\mathsf{P}_{M,\nu}\mathbf{F},
\end{align*}
and we can now proceed similarly to section \ref{sec:initial_case_m-1_s-1} and (noting that $\mathrm{Nullity}(\widetilde{\mathsf{A}})=\mathrm{Nullity}(\mathsf{A})+2M$) construct $2M$ linearly independent vectors $\mathbf{v}_{k,1},\mathbf{v}_{k,2}\in\mathbb{C}^{M(\nu+2)}, k=1,\dots, M,$ such that
\begin{align}\label{eqn:desired_null_solutions_M>1}
	\mathsf{P}_{M,\nu}\mathsf{A}\mathbf{v}_{k,1}=\mathsf{P}_{M,\nu}\mathsf{A}\mathbf{v}_{k,2}=\mathbf{0},\quad k=1,\dots,M.
\end{align}
We can construct those analogously to what has been done in section~\ref{sec:initial_case_m-1_s-1} for the case $M=1$: Let us denote the block-wise analogue of the endpoint unit vectors by
\begin{align*}
	(\mathbf{e}_{k,0})_{j}=\begin{cases}
		1,&\text{if}\ j=(k-1)(\nu+2),\\
		0,&\text{o.w.}
	\end{cases}, \quad 	(\mathbf{e}_{k,\nu+1})_{j}=\begin{cases}
	1,&\text{if}\ j=(k-1)(\nu+2)+\nu+1,\\
	0,&\text{o.w.}
\end{cases}.
\end{align*}
Then following Prop.~\ref{prop:fast_solution_linear_system_M>1} we can find  $\mathbf{v}_{k,1},\mathbf{v}_{k,2}\in \{\mathbf{x}\in\mathbb{C}^{M(\nu+2)}\,\big|\,x_{(k-1)(\nu+2)+l}=0, l=0,\nu+1,k=1,\dots, M\},  k=1,\dots, M,$ with
\begin{align}\label{eqn:linear_system_for_elements_of_kernel_M>1}
		\mathsf{P}_{M,\nu}{\mathsf{A}}\mathbf{v}_{k,1}=-\mathsf{P}_{M,\nu}\mathsf{A}\mathbf{e}_{k,0}, \quad \mathsf{P}_{M,\nu} \mathsf{A}\mathbf{v}_{k,2}=-\mathsf{P}_{M,\nu}\mathsf{A}\mathbf{e}_{k,\nu+1}.
\end{align}
Having found these vectors we can now write $\bm{\alpha}$ the solution of \eqref{eqn:collocation_equations_M>1} as
\begin{align*}
\bm{\alpha}=\bm{\alpha}_0+\sum_{k=1}^M\delta_1^{{[k]}}\mathbf{v}_{k,1}+\delta_2^{{[k]}}\mathbf{v}_{k,2}
\end{align*}
and determine the remaining $2M$ coefficients $\delta_{1}^{{[k]}},\delta_{2}^{{[k]}},k=1,\dots, M$ by solving the $2M\times 2M$ auxiliary system
\begin{align}\label{eqn:final_2Mx2M_system}
\begin{pmatrix}
	(\mathsf{A}\mathbf{v}_{1,1})_0&	(\mathsf{A}\mathbf{v}_{1,2})_0&\cdots &(\mathsf{A}\mathbf{v}_{M,2})_{0}\\
	(\mathsf{A}\mathbf{v}_{1,1})_{\nu+1}&(\mathsf{A}\mathbf{v}_{1,2})_{\nu+1}&\cdots &(\mathsf{A}\mathbf{v}_{M,2})_{\nu+1}\\
	(\mathsf{A}\mathbf{v}_{1,1})_{\nu+2}&(\mathsf{A}\mathbf{v}_{1,2})_{\nu+2}&\cdots&(\mathsf{A}\mathbf{v}_{M,2})_{\nu+2}\\
	\vdots&\vdots&\ddots&\vdots\\
	(\mathsf{A}\mathbf{v}_{1,1})_{M(\nu+2)-1}&(\mathsf{A}\mathbf{v}_{1,2})_{M(\nu+2)-1}&\cdots&(\mathsf{A}\mathbf{v}_{M,2})_{M(\nu+2)-1}
\end{pmatrix}\begin{pmatrix}
\delta_1^{{[1]}}\\
\delta_2^{{[1]}}\\
\delta_1^{{[2]}}\\
\vdots\\
\delta_2^{{[M]}}
\end{pmatrix}=\begin{pmatrix} F_0-(\mathsf{A}\bm{\alpha}_0)_0\\
 F_{\nu+1}-(\mathsf{A}\bm{\alpha}_0)_{\nu+1}\\
  F_{\nu+2}-(\mathsf{A}\bm{\alpha}_0)_{\nu+2}\\
  \vdots\\
   F_{M(\nu+2)-1}-(\mathsf{A}\bm{\alpha}_0)_{M(\nu+2)-1}
\end{pmatrix}.
\end{align}
\subsubsection*{An algorithm for the efficient construction of the Levin method ($\mathbf{M\geq2,s=0}$)}
In summary our algorithm for the efficient computation of the Levin method in the case $M\geq2, s=0$ thus takes the following steps:
\begin{enumerate}
	\item Solve \eqref{eqn:projected_system_case_M>1} using the method described in Prop.~\ref{prop:fast_solution_linear_system_M>1}, to find $\bm{\alpha}_0\in \{\mathbf{x}\in\mathbb{C}^{M(\nu+2)}\,\big|\,x_{(k-1)(\nu+2)+l+1}=0, l=0,\nu+1,k=1,\dots, M\}$ with
	\begin{align*}
		\mathsf{P}_{M,\nu}\widetilde{\mathsf{A}}\mathsf{P}_{M,\nu}\bm{\alpha}_0=\mathsf{P}_{M,\nu}\mathbf{F}.
	\end{align*}
	\item Solve \eqref{eqn:linear_system_for_elements_of_kernel_M>1} using the method described in Prop.~\ref{prop:fast_solution_linear_system_M>1}, to find $2M$ linearly independent vectors $\mathbf{v}_{k,1},\mathbf{v}_{k,2}, k=1,\dots M$ with 
	\begin{align*}
		\mathsf{P}_{M,\nu}\mathsf{A}\mathbf{v}_{k,j}=\mathbf{0},\quad j=1,2, k=1,\dots, M.
	\end{align*}
	\item Compute the coefficients in \eqref{eqn:final_2Mx2M_system} and solve the resulting $2M\times 2M$ linear system for $\delta_j^{{[k]}}, j=1,2,k=1,\dots, M$.
	\item Finally, let $\bm{\alpha}=\bm{\alpha}_0+\sum_{k=1}^M\delta_1^{{[k]}}\mathbf{v}_{k,1}+\delta_2^{{[k]}}\mathbf{v}_{k,2},$ and compute\begin{align*}
		\mathcal{Q}_{\omega}^{L,[\nu]}[f]&=\langle \mathbf{q}(1),\mathbf{w}(1)\rangle-\langle\mathbf{q}(-1),\mathbf{w}(-1)\rangle=\sum_{k=1}^{M}\left(q_k(1)w_k(1)-q_k(-1)w_k(-1)\right)\\
		&=\sum_{k=1}^M\sum_{n=0}^{\nu+1}\left(\alpha_n^{{[k]}}w_k(1)-(-1)^n\alpha^{{[k]}}_n w_k(-1)\right).
	\end{align*}
\end{enumerate}
This takes $\mathcal{O}(M\nu\log\nu+M^3d^2\nu)$ operations overall.

\subsection{Vectorial oscillatory weight functions with endpoint derivative values, $\mathbf{M\geq 2,s\geq 1}$}
Finally, the fast computation of the Levin method in the case $M\geq 2, s\geq1$, can be achieved through a similar extension of the above as presented in section~\ref{sec:including_endpoint_derivative_values}. In particular our $M$ polynomial basis spaces are each spanned by $\{\ChebyshevT_{n}\}_{n=0}^{\nu+2s+1}$ and we write the collocation solution as
\begin{align*}
	q_j(x)=\sum_{n=0}^{\nu+2s+1}\alpha_{n}^{{[j]}}\ChebyshevT_{n}(x),\quad j=1,\dots,M,
\end{align*}
for the coefficients $\alpha_{n}^{{[j]}}$ which are determined through the collocation conditions in Levin's method \eqref{eqn:general_levin_collocation_equations}. In the following the block-wise projection onto the first $\nu+2$ coordinates, $\mathsf{P}_{M,\nu+2}:\mathbb{C}^{M(\nu+2)}\rightarrow\mathbb{C}^{M(\nu+2)}$ {is} given by 
\begin{align*}
	\mathsf{P}_{M,\nu+2}\begin{pmatrix}
		\mathbf{x}^{{[1]}}\\
		\mathbf{x}^{{[2]}}\\
		\vdots\\
		\mathbf{x}^{{[M]}}
	\end{pmatrix}:=\begin{pmatrix}
	\mathsf{P}_{\nu+2}\mathbf{x}^{{[1]}}\\
	\mathsf{P}_{\nu+2}\mathbf{x}^{{[2]}}\\
	\vdots\\
	\mathsf{P}_{\nu+2}\mathbf{x}^{{[M]}}
	\end{pmatrix},\quad \mathbf{x}^{{[k]}}\in\mathbb{C}^{\nu+2s+2},\, k=1,\dots, M.
\end{align*}
We then construct the auxiliary vectors $\mathbf{h}^{{[k,j]}}\in\mathbb{C}^{\nu+2}$,
\begin{align}\label{eqn:definition_of_hkj}
	\mathbf{h}^{{[k,j]}}=\left(
		h_{1,0}^{{[k,j]}},
		h_{1,1}^{{[k,j]}},
		\dots,
		h_{1,\nu+1}^{{[k,j]}},
		h_{2,0}^{{[k,j]}},
		\dots,
		h_{M,\nu+1}^{{[k,j]}}\right)
\end{align}
with coordinates given by 
\begin{align*}
	{h}_{l,m}^{{[k,j]}}=\left(\left[\bm{\mathcal{L}}_\omega\left(\mathbf{e}_{k}\ChebyshevT_{\nu+j+1}(x)\right)\right]_{x=c_m}\right)_l,\quad m=0,\dots,\nu+1,j=1,\dots,2s, k=1,\dots, M,
\end{align*}
As in section~\ref{sec:including_endpoint_derivative_values}{,} the construction of these vectors requires no more than $\mathcal{O}(Mds(\nu+2))$ operations. In order to solve the Levin collocation problem efficiently we follow the recipe introduced in section~\ref{sec:including_endpoint_derivative_values} by separating the first $\nu+2$ coefficients in each component ($k=1,\dots, M$) of the problem from the remaining $2s$ to arrive at: (i) a large system that can be solved efficiently using the method described in section~\ref{sec:vectorial_weight_functions} and (ii) {another}, smaller system, that requires direct solution. In particular, these steps take the following form
\begin{enumerate}[(i)]
	\item Find $\bm{\beta}, \bm{\beta}^{{[k,j]}}, k$ which solve $\mathsf{A}\bm{\beta}=\mathbf{f}$ and $\mathsf{A}\bm{\beta}^{{[k,j]}}=-\mathbf{h}^{{[k,j]}}$ and note that 
	\begin{align*}
		\mathsf{A}\mathsf{P}_{M,\nu+2}\bm{\alpha}=\mathbf{f}-\sum_{k=1}^M\sum_{j=1}^{2s}\alpha_{nu+1+j}^{{[k]}}\mathbf{h}^{{[k,j]}}.
	\end{align*}
Thus we have
\begin{align*}
	\mathsf{P}_{M,\nu+2}\bm{\alpha}=\bm{\beta}+\sum_{k=1}^{M}\sum_{j=1}^{2s}\alpha_{\nu+1+j}^{{[k]}}\bm{\beta}^{{[k,j]}}{.}
\end{align*}
\item Consider the following $2Ms\times 2Ms$ linear system:
\begin{align}\label{eqn:remaining_auxiliary_system_vectorial_weights}
	\left[\frac{\dd^l}{\dd x^{l}}\bm{\mathcal{L}}_\omega\mathbf{q}\right]_{x=\pm1}&=\left[\frac{\dd^l}{\dd x^{l}}\mathbf{f}\right]_{x=\pm 1},\quad l=1,\dots,s,
\end{align}
to solve for the remaining unknowns $\alpha^{{[j]}}_{n}, j=1,\dots, M,n=\nu+2,\dots,\nu+2s+1$.
\end{enumerate}

\subsubsection*{An algorithm for the efficient construction of the Levin--Clenshaw--Curtis method ($\mathbf{M\geq2,s\geq1}$)}
Thus we can summarise the extension of our algorithm for the efficient computation of the Levin method in the case $s\geq1$ as follows:
\begin{enumerate}
	\item Using the algorithm described in \S\ref{sec:vectorial_weight_functions} find $\bm{\beta},\bm{\beta}^{{[k,j]}}\in\mathbb{C}^{M(\nu+2)}$ with
	\begin{align*}
		\mathsf{A}\bm{\beta}&=\mathbf{f},\quad		\mathsf{A}\bm{\beta}^{{[k,j]}}=-\mathbf{h}^{{[k,j]}},
	\end{align*}
	where $\mathbf{h}^{{[k,j]}}$ is defined in \eqref{eqn:definition_of_hkj} for $k=1,\dots, M$, $j=1,\dots,2s$ and $\mathsf{A}$ is as defined in \eqref{eqn:def_of_block_A}.
	\item Express the first $\nu+2$ coordinates of $\bm{\alpha}^{{[j]}}\in\mathbb{C}^{\nu+2s+2}, j=1,\dots, M$ in terms of $\alpha^{{[j]}}_{\nu+2},\dots,\alpha^{{[j]}}_{\nu+2s+1}$,
	\begin{align*}
	\mathsf{P}_{M,\nu+2}\bm{\alpha}=\bm{\beta}+\sum_{k=1}^{M}\sum_{j=1}^{2s}\alpha_{\nu+1+j}^{{[k]}}\bm{\beta}^{{[k,j]}}.
	\end{align*}
	\item Solve the remaining $2Ms\times 2Ms$ auxiliary system \eqref{eqn:remaining_auxiliary_system_vectorial_weights} for $\alpha^{{[j]}}_{\nu+2},\dots,\alpha^{{[j]}}_{\nu+2s+1}$, $j=1,\dots, M$.
	\item This solution then uniquely determines $\alpha^{{[j]}}_{0},\dots,\alpha^{{[j]}}_{\nu+2s+1}, j=1,\dots, M$ and we can compute the Levin approximation to $I_\omega[f]$ by 
	\begin{align*}
		\mathcal{Q}_{\omega}^{L,[\nu,s]}[f]&=\langle \mathbf{q}(1),\mathbf{w}(1)\rangle-\langle\mathbf{q}(-1),\mathbf{w}(-1)\rangle=\sum_{k=1}^{M}\left(q_k(1)w_k(1)-q_k(-1)w_k(-1)\right)\\
		&=\sum_{k=1}^M\sum_{n=0}^{\nu+2s+1}\left(\alpha_n^{{[k]}}w_k(1)-(-1)^n\alpha^{{[k]}}_n w_k(-1)\right).
	\end{align*}
\end{enumerate}
This takes $\mathcal{O}(s(M\nu\log\nu+M^3d^2\nu)+M^3s^3)$ operations overall.
\section{Numerical Experiments}\label{sec:numerical_experiments}
We now illustrate the efficient behaviour of {the} accelerated Levin--Clenshaw--Curtis method {with} two examples. {The} reference solution for the exact integral was computed using Clenshaw--Curtis quadrature on the full oscillatory integrand using $10^7$ points \revisionsA{and we compare our methods against the following reference methods where available:}
\begin{itemize}
	\item \revisionsA{The direct solution of the dense collocation system \eqref{eqn:chebyshev_interpolation_system_general} using Gaussian elimination using Matlab's ``backslash'' routine. We use direct solution of the full collocation system since we found iterative methods to struggle with the conditioning of this system in practice even for moderate values of $\nu$;}
	\item \revisionsA{The fast Levin method introduced by Olver \cite{olver2010fast};}
	\item \revisionsA{The} \verb|LevinRule| \revisionsA{in Mathematica, based on the adaptive Levin method introduced by Moylan \cite{moylan2008highly}.}
\end{itemize}
\subsection{Example 1: {Finite-length} Fourier transform}\label{sec:example1}
In the first example, we consider the case of \eqref{eqn:general1d_integral_levin} with $g(x)=x$, and $f(x)=x/(x^2+1/50)$, i.e. we wish to approximate the integral
\begin{align*}
	I^{(1)}_\omega[f]=\int_{-1}^1 \frac{x}{x^2+0.02}e^{i\omega x}\,\dd x.
\end{align*}
In this case the operator $\mathcal{L}_\omega$ takes the form $\mathcal{L}_\omega=\dd/\dd x+i\omega$ and one can use Lemma~\ref{lem:banded_operators_Chebyshev_polynomials} to check that $\widetilde{\mathcal{L}}_\omega$ has the following banded matrix representation in the sense of \eqref{eqn6:banded_matrix_representation_tildeL}:
\begin{align*}
	\mathsf{B}=\begin{pmatrix}
		-i\omega/2&-1/2&i\omega/4&0&\\
		0&-i\omega/4&-1&i\omega/4&0&\\
		i\omega/2&1/2&-i\omega/2&-3/2&i\omega/4&0\\
		0&i\omega/4&1&-i\omega/2&-2&i\omega/4&0&\\
		&&\ddots&\ddots&\ddots&\ddots&\ddots\\
	\end{pmatrix}.
\end{align*}

In Fig.~\ref{fig6:asymptotic_convergence_linear_levinCC} we see the error of the method, for a fixed number of collocation points $\nu$, as a function of the frequency $\omega$. It is apparent that the error remains uniformly bounded in $\omega$ and in fact decays as the frequency increases, meaning the method is even more accurate for larger frequencies while remaining convergent for small values of $\omega$.

\begin{figure}[h!]
	\centering
		\includegraphics[width=0.475\linewidth]{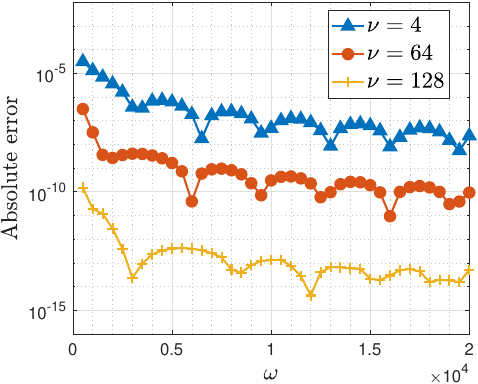}
	\caption{\revisionsA{The absolute error, $|\mathcal{Q}_\omega^{{L},[\nu]}[f]-I^{(1)}_\omega[f]|$, in the Levin--Clenshaw--Curtis method for $I_\omega^{(1)}[f]$ as a function of $\omega$ for fixed $\nu=4,64,128$.}}
	\label{fig6:asymptotic_convergence_linear_levinCC}
\end{figure}

In Fig.~\ref{fig6:nu_time_convergence_linear_levincc} we look at the error and timing of our method as we increase the number of quadrature points $\nu$ for a fixed value of the frequency $\omega$. In Fig.~\ref{fig6:nu_convergence_linear_levin_CC} we observe spectral convergence in $\nu$, which is to be expected since the amplitude function $f$ is analytic in an open complex neighbourhood of $[-1,1]$ \cite[Chapter~8]{trefethen2019approximation}. In Fig.~\ref{fig6:timing_nu_convergence_linear_levin_CC} we compare the time our method takes to compute the Levin approximation as described in \S\ref{sec:initial_case_m-1_s-1} against the \revisionsA{taken by reference methods described at the beginning of this section with the same number of collocation points}. \revisionsA{All} times were computed {using MATLAB} on a single core of an Apple M2 and the times shown in the graph correspond to the average time over 5 identical computations. As we predicted, the cost of our method appears to grow no faster than $\mathcal{O}(\nu\log\nu),$ \revisionsA{and, in particular, it appears to significantly outperform the reference methods.}

\revisionsA{We note from Figure~\ref{fig6:nu_convergence_linear_levin_CC} that no significant instabilities appear in our method even for large values of $\nu$. This is to be seen in the context of the full Levin collocation system $\mathsf{A}$ (cf. \eqref{eqn:chebyshev_interpolation_system_general}) which is known to be badly conditioned when $\omega>\nu$. In Figure~\ref{fig:condition_numbers2} we see the condition number of $\mathsf{A}$ compared against the condition number of the linear systems that require solution in our new approach, namely $\mathsf{P}_{\nu} \widetilde{\mathsf{B}}\mathsf{P}_{\nu}$ and the $2\times2$ system \eqref{eqn:final_2x2system}. The difference is striking and we observe that our methodology leads to reasonably well-conditioned large sparse systems $\mathsf{P}_{\nu} \widetilde{\mathsf{B}}\mathsf{P}_{\nu}$ and a remaining system \eqref{eqn:final_2x2system} which is badly conditioned but which is very small and thus does not need to rely on iterative solvers.}\newpage
\begin{figure}[h!]
	\centering
	\begin{subfigure}[h]{0.49\linewidth}
		\centering
		\includegraphics[width=0.95\linewidth]{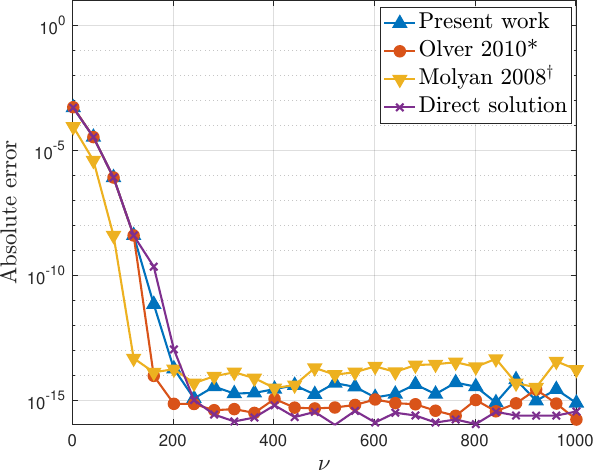}
		\caption{Absolute error, $|\mathcal{Q}_\omega^{{L},[\nu]}[f]-I^{(1)}_\omega[f]|$.}
		\label{fig6:nu_convergence_linear_levin_CC}
	\end{subfigure}%
	\begin{subfigure}[h]{0.49\linewidth}
		\centering
		\includegraphics[width=0.97\linewidth]{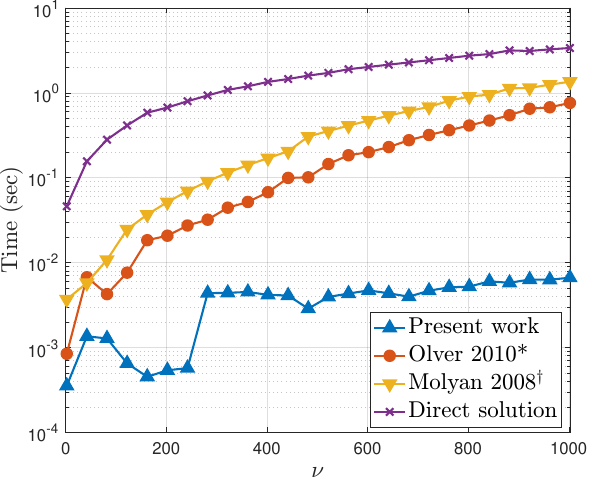}
		\caption{Average time to solve \eqref{eqn:general_discrete_interpolation_conditions}.}
		\label{fig6:timing_nu_convergence_linear_levin_CC}
	\end{subfigure}
	\caption{The error and timing of the Levin--Clenshaw--Curtis method as a function of the number of collocation points $\nu$ for fixed frequency $\omega=100$.}
	\label{fig6:nu_time_convergence_linear_levincc}
\end{figure}
\renewcommand{\thefootnote}{*} 
\footnotetext{\revisionsA{Note that Olver's method \cite{olver2010fast} has a cost of $\mathcal{O}(m^2\nu + m\nu\log\nu)$ where $m$ is the number of Krylov dimensions used in the corresponding GMRES solver. However, in practice we found that $m\sim \nu$ was necessary in order to achieve approximation accuracy comparable to the other methods, which leads to the observed computational cost.}}

\renewcommand{\thefootnote}{$\dagger$} 
\footnotetext{\revisionsA{We used the standard Mathematica implementation of Moylan's method \cite{moylan2008highly} but note that all the other methods are computed using Matlab code.}}

	\begin{figure}[h!]
		\centering
		\includegraphics[width=0.5\linewidth]{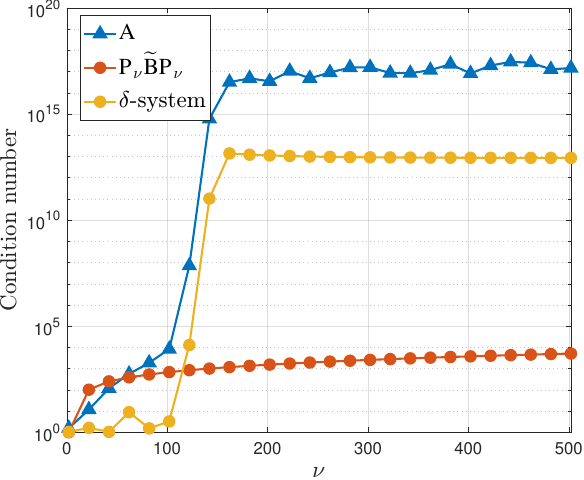}
		\caption{\revisionsB{The condition number of the full collocation system compared against the systems appearing in our present methodology.}}
		\label{fig:condition_numbers2}
	\end{figure}

\subsection{Example 2: Hankel transform}
We now consider the Hankel transform as introduced in Example~\ref{ex:bessel_weight_function} with $\gamma=1, a=2, f=x/(x^2+1/50)$, i.e. we seek to approximate the integral
\begin{align*}
	I^{(2)}_\omega[f]:=\int_{-1}^1\frac{x}{x^2+0.02}\mathrm{J}_1(\omega(x+2))\dd x.
\end{align*}
In the interest of brevity we do not print the form of the matrix $\widetilde{\mathsf{B}}$ in this case, but we note that it can easily be found using simple manipulations by hand or, alternatively, directly incorporated into the algorithmic implementation using sparse matrix manipulations starting from the two infinite matrices
\begin{align*}
	\mathsf{M}=\begin{pmatrix}
		0&1/2&0&\\
		1&0&1/2&0\\
		0&1/2&0&1/2&0\\
		&&\ddots&\ddots&\ddots
			\end{pmatrix}, \quad \mathsf{D}=\begin{pmatrix}
0&1/2&0\\
0&0&1&0\\
0&-1/2&0&3/2&0\\
&0&-1&0&2&0&\\
&&&\ddots&\ddots&\ddots&
	\end{pmatrix},
\end{align*}
where $\mathsf{M}$ represents the multiplication of the Chebyshev basis by $x$ and $\mathsf{D}$ represents the action of $(1-x^2)\dd/\dd x$ on the basis. In Fig.~\ref{fig6:asymptotic_convergence_Bessel_levinCC} we see again the error of the method for a fixed number of collocation points $\nu=4,64,128$, as a function of the frequency $\omega$. As expected we we see that again the error remains uniformly bounded in $\omega$ and in fact decays as the frequency increases, meaning the method is even more accurate for larger frequencies while remaining convergent for small values of $\omega$.

\begin{figure}[h!]
	\centering
	\includegraphics[width=0.475\linewidth]{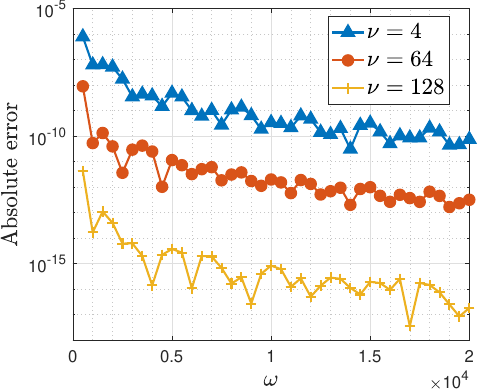}
	\caption{\revisionsA{The absolute error, $|\mathcal{Q}_\omega^{{L},[\nu]}[f]-I^{(2)}_\omega[f]|$, in the Levin--Clenshaw--Curtis method for $I_\omega^{(2)}[f]$ as a function of $\omega$ for fixed $\nu=4,64,128$.}}
	\label{fig6:asymptotic_convergence_Bessel_levinCC}
\end{figure}

Moreover, in Fig.~\ref{fig6:nu_time_convergence_Bessel_levincc} we see that also in the present case the computational cost of the method scales very favourably with the number of collocation points $\nu$. Again all times were computed {using MATLAB} on a single core of an Apple M2 and the times shown in the graph correspond to the average time over 5 identical computations. Of course, we achieve the known and usual convergence of the Levin method with respect to $\nu$ in this Clenshaw--Curtis setting uniformly in $\omega\geq 1$.

\begin{figure}[h!]
	\centering
	\begin{subfigure}[h]{0.49\linewidth}
		\centering
		\includegraphics[width=0.95\linewidth]{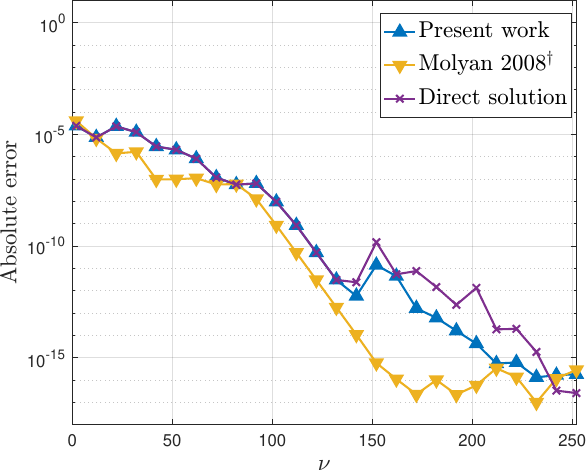}
		\caption{Absolute error, $|\mathcal{Q}_\omega^{{L},[\nu]}[f]-I^{(2)}_\omega[f]|$.}
		\label{fig6:nu_convergence_Bessel_levin_CC}
	\end{subfigure}%
	\begin{subfigure}[h]{0.49\linewidth}
		\centering
		\includegraphics[width=0.97\linewidth]{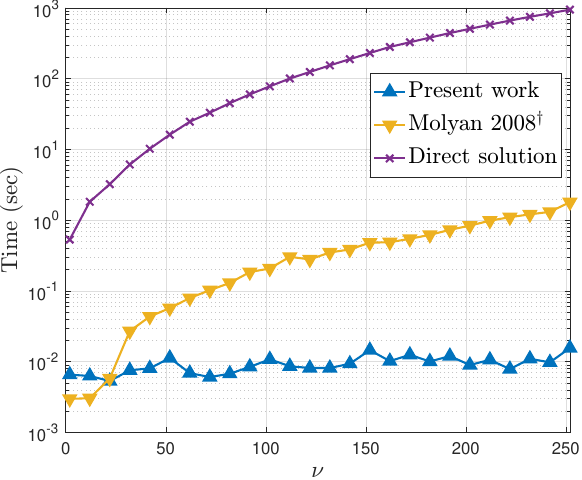}
		\caption{Average time to solve \eqref{eqn:general_discrete_interpolation_conditions}.}
		\label{fig6:timing_nu_convergence_Bessel_levin_CC}
	\end{subfigure}
	\caption{The error and timing of the Levin--Clenshaw--Curtis method for $I^{(2)}_\omega[f]$ as a function of the number of collocation points $\nu$ for fixed frequency $\omega=100$.}
	\label{fig6:nu_time_convergence_Bessel_levincc}
\end{figure}
\renewcommand{\thefootnote}{$\dagger$} 
\footnotetext{\revisionsA{We used the standard Mathematica implementation of Moylan's method \cite{moylan2008highly} but note that all the other methods are computed using Matlab code.}}
\section{Concluding remarks}\label{sec:conclusions}
We introduced an accelerated version of the Levin method for the efficient approximation of a wide class of highly oscillatory integrals. In particular we exploited the banded matrix representation of certain differential and multiplicative operators when acting on a Chebyshev polynomial basis in order to reduce the solution of the Levin collocation equations to essentially the inversion of a banded matrix system. We showed that this approach can be pursued in full generality of Levin's original formulation permitting also the use of vectorial weight functions and the inclusion of endpoint derivative values. Finally we demonstrated on two numerical examples that our approach indeed performs very well in practice.

Future research will focus on the extension of this idea to higher dimensional domains (cf. \cite{ashton2023}) and the use of alternative polynomial basis functions beyond Chebyshev polynomials (which allowed for fast inversions of the transformation matrix $\mathsf{C}$ above) for example based on recent work on fast polynomial transforms by Townsend et al. \cite{townsend2018fast}. {Another promising avenue for future work is to extend the fast Levin--Clenshaw--Curtis algorithm to integrals with stationary points, either through incorporation in an adaptive method (cf. \cite{chen2022adaptive}) or in the modified Levin--Olver method for integrals with stationary points (cf. \cite{olver2010fast}).} \revisionsB{In particular, the observations of the present manuscript can be used to show that the Levin--Clenshaw--Curtis collocation matrix can be related to a low-rank update of a sparse matrix, thus facilitating the use of designated solvers also for regularised versions of the corresponding linear systems (e.g. \cite{guttel2024sherman}) which form a central ingredient in the adaptive Levin method described by \cite{moylan2008highly,chen2022adaptive}.}
\section*{Acknowledgements}
The authors would like to thank Marcus Webb (University of Manchester) for several helpful discussions, in particular about the reordering of Hockney type which we used in \S\ref{sec:including_endpoint_derivative_values}. We also thank the anonymous reviewers for their valuable comments and suggestions, which have helped improve the manuscript. The authors gratefully acknowledge support from the UK Engineering and Physical Sciences Research Council (EPSRC) grant EP/L016516/1 for the University of Cambridge Centre for Doctoral Training, the Cambridge Centre for Analysis. GM would also like to thank the Isaac Newton Institute for Mathematical Sciences, Cambridge, for support and hospitality during the programme Mathematical theory and applications of multiple wave scattering, where work on this paper was undertaken. This work was supported by EPSRC grant EP/R014604/1.

\section*{Declarations}
\paragraph{Conflict of interest} The authors have no conflicts of interest to declare that are relevant to the content of this article.

\bibliographystyle{siam}     
\bibliography{biblio1}
\addcontentsline{toc}{section}{References}

\end{document}